\documentclass[12pt]{amsart}
\usepackage[]{hyperref}
\usepackage{amssymb,amsfonts,amsmath,amsopn,amstext,amscd,latexsym, amsthm, enumerate,mathrsfs}
\usepackage{lipsum}
\usepackage[margin=32mm]{geometry}
\headheight=14pt
\parskip 1mm

\usepackage{bbm}
\usepackage[all,cmtip]{xy}

\usepackage{enumerate}
\usepackage[shortlabels]{enumitem}

\newtheorem{theorem}{Theorem}[section]
\newtheorem{thm}[theorem]{Theorem}

\newtheorem*{thmA}{Theorem~A}
\newtheorem*{thmB}{Theorem~B}
\newtheorem*{thmC}{Theorem C}

\newtheorem{lem}[theorem]{Lemma}

\newtheorem{prop}[theorem]{Proposition}

\theoremstyle{remark}

\newtheorem*{ex}{\textrm{Example}}

\DeclareMathOperator{\Irr}{Irr}

\DeclareMathOperator{\SL}{SL}



\newcommand{\Syl}{{\mathrm {Syl}}}

\DeclareMathOperator{\Sym}{Sym}

\DeclareMathOperator{\PSL}{PSL}

\newcommand{\OO}{\mathbf{O}}
\newcommand{\Centralizer}{\mathbf{C}}
\newcommand{\Center}{\mathbf{Z}}

\numberwithin{equation}{section}

\newcommand{\Alt}{{\mathrm {Alt}}}

\DeclareMathOperator{\Real}{Re}

\begin{document}

\title[]{Real class sizes}

\author[H. P. Tong-Viet]{Hung P. Tong-Viet}
\address{Department of Mathematical Sciences, Binghamton University, Binghamton, NY 13902-6000, USA}
\email{tongviet@math.binghamton.edu}


\subjclass[2010]{Primary 20E45; Secondary 20D10}

\date{\today}

\keywords{Real conjugacy classes, real class sizes, prime graphs}

\begin{abstract} 
In this paper, we study the structures of finite groups using some arithmetic conditions on the sizes of real conjugacy classes.  We prove that a finite group is solvable if  the prime graph on the real class sizes of the group is disconnected.   Moreover, we show that if the sizes of all non-central real conjugacy classes of a  finite group $G$ have the same $2$-part and the Sylow $2$-subgroup of $G$  satisfies certain condition, then $G$ is solvable.  
 \end{abstract}

\maketitle

\section{Introduction}

Let $G$ be a finite group. An element $x\in  G$ is said to be real if there exists an element $g\in G$ such that $x^g=x^{-1}$.  We denote by $\Real(G)$ the set of all real elements of $G$. A conjugacy class $x^G$ containing $x\in G$ is said to be real  if $x$ is a real element of $G$ or equivalently $x^G=(x^{-1})^G$. The size of a real conjugacy class is called a real class size. Several arithmetic properties of the real class sizes can be conveniently stated using  graph theoretic language.
The prime graph on the real class sizes of a finite group $G$, denoted by $\Delta^*(G)$, is a simple graph with vertex set $\rho^*(G)$ the set of primes dividing the size of some real conjugacy class of $G$ and there is an edge between two vertices $p$ and $q$ if and only if the product $pq$ divides some real class size.

Now a prime $p$ is not a vertex of $\Delta^*(G)$, that is, $p\not\in\rho^*(G)$ if and only if $p$ divides no real class size of $G$. In \cite{DNT}, the authors show that $2$ is not a vertex of $\Delta^*(G)$ if and only if $G$ has a normal Sylow $2$-subgroup $S$ (i.e., $G$ is $2$-closed) and $\Real(S)\subseteq\Center(S)$. For odd primes, a similar result is not that satisfactory. Combining results in \cite{GNT}, \cite{IN} and \cite{Tiep}, we can show that if an  odd prime $p$ is not a vertex of $\Delta^*(G)$ and assume further that when $p=3$, $\SL_3(2)$ is not a composition factor of $G$, then $\OO^{2'}(G)$ has a normal Sylow $p$-subgroup and $\OO^{p'}(G)$ is solvable, in particular, $G$ is $p$-solvable (see Lemma \ref{lem: Ito-Michler theorem for conjugacy classes}).
The proofs of the aforementioned results, especially, for odd primes, are quite involved and depend heavily on the classification of finite simple groups. This is in contrast to the similar result for all conjugacy classes, that is, if a prime $p$ does not divide the size of any conjugacy classes of $G$ then $G$ has a normal central Sylow $p$-subgroup. The proof of this classical result is just an application of Jordan's theorem on the existence of derangements in finite permutation groups. 

It is proved in \cite{DNT} that $\Delta^*(G)$ has at most two connected components. In our first result, we will show that if $\Delta^*(G)$ is disconnected, then $G$ is solvable.

\begin{thmA} Let $G$ be a finite group. If the prime graph on the real class sizes of $G$ is disconnected, then $G$ is solvable.
\end{thmA}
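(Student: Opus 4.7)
The plan is induction on $|G|$. Assume $G$ is a minimal counterexample: non-solvable with $\Delta^*(G)$ disconnected. Since $\Delta^*(G)$ has at most two components (by \cite{DNT}, as recalled in the introduction), it has exactly two, call them $\pi_1$ and $\pi_2$. Two basic facts will be used repeatedly. First, the prime divisors of any single real class size of $G$ form a clique in $\Delta^*(G)$, so every real class size is either a $\pi_1$-number or a $\pi_2$-number. Second, if $M \trianglelefteq G$ and $x\in M$ is real in $M$, then $x$ is real in $G$ and $|x^M|$ divides $|x^G|$; in particular, edges of $\Delta^*(M)$ that come from $M$-classes survive in $\Delta^*(G)$.

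The first step is to trivialize the solvable radical $R=\OO_\infty(G)$. If $R\ne 1$, induction applied to $G/R$ gives that $\Delta^*(G/R)$ is connected, since $G/R$ is still non-solvable of smaller order. One then transfers this connectivity to $\Delta^*(G)$: every real $x\in G$ projects to a real $\bar x\in G/R$ with $|\bar x^{G/R}|$ dividing $|x^G|$, and conversely one lifts real elements of $G/R$ to real elements of $G$ across the solvable kernel $R$, along the lines of the structural characterizations of non-vertices of $\Delta^*(\cdot)$ recalled from \cite{DNT,GNT,IN,Tiep}. This would contradict disconnectedness of $\Delta^*(G)$, forcing $R=1$.

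With $R=1$, the socle $N=T_1\times\cdots\times T_k$ is a direct product of isomorphic non-abelian simple groups $T_i\cong T$, and $C_G(N)=1$ so that $G\hookrightarrow\Aut(N)$. To reduce to $k=1$, observe that for any real $x,y\in T$ the element $(x,y,1,\ldots,1)\in N$ is real with $N$-class size $|x^T|\cdot|y^T|$; varying $x,y$ over real classes of $T$ produces real class sizes that pair together every two primes of $\rho^*(T)$, forcing $\Delta^*(N)$, and hence $\Delta^*(G)$, to be connected. So $k=1$ and $G$ is almost simple with socle $T$.

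The main obstacle is then the almost simple case, to be handled by a case analysis built on the classification. For each family of $T$ (alternating, sporadic, or Lie type) one must exhibit explicit real classes of $G$ whose sizes share primes across any proposed partition $\rho^*(G)=\pi_1\sqcup\pi_2$. The useful ingredients are: involutions are always real; in Lie type, real semisimple elements whose centralizers are products of tori of mixed prime content, together with real unipotent classes whose sizes are explicitly computable; for alternating and sporadic groups, one reads off the known real class data. Lemma~\ref{lem: Ito-Michler theorem for conjugacy classes} is the decisive lever here, since it shows that essentially every prime divisor of $|G|$ must be a vertex of $\Delta^*(G)$ for $G$ almost simple and non-solvable (with the small exception for $p=3$ and composition factor $\SL_3(2)$, treated directly), so the task reduces to exhibiting enough real classes to link all these primes together. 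This last step, handling each almost simple $G$ uniformly, is expected to be the technical heart of the argument.
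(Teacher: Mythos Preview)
Your reduction to trivial solvable radical does not go through. You assert that if $R=\OO_\infty(G)\ne 1$ then $\Delta^*(G/R)$ is connected (fine, by minimality), and then that this connectivity ``transfers'' to $\Delta^*(G)$. But lifting real elements from $G/N$ to $G$ is only available when $|N|$ is odd or the element has odd order (Lemma~\ref{lem: real elements}(6)); the paper explicitly warns after Lemma~\ref{lem:subgraph} that $\Delta^*(G/N)$ need not be a subgraph of $\Delta^*(G)$ in general. Since $R$ can have even order, neither the lifting nor the subgraph relation is available. And even if $\Delta^*(G/R)$ were a connected subgraph of $\Delta^*(G)$, that would not force $\Delta^*(G)$ to be connected: there could be vertices in $\rho^*(G)\setminus\rho^*(G/R)$ coming from real elements of $G$ that become central modulo $R$. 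The same defect recurs in your $k\ge 2$ step: showing $\Delta^*(N)$ is complete only places $\rho^*(N)$ inside one component of $\Delta^*(G)$; it does not rule out a second component supported on primes in $\rho^*(G)\setminus\rho^*(N)$, which can arise from real elements of $G\setminus N$.

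The paper avoids all of this. It never reduces to the almost simple case. Instead, using Lemma~\ref{lem:2-closed} it produces an involution $i$ with $|i^G|$ a nontrivial $\pi_2$-number and $\Centralizer_G(i)$ solvable (being $2$-closed). Fisman--Arad's proof of Szep's conjecture then rules out $G$ simple, since $|i^G|$ and $|x^G|$ are coprime for any real $x$ of odd order. From there the paper only kills $\OO_{2'}(G)$ (where Lemma~\ref{lem:subgraph} does apply), shows via an internal argument that $G=G'\langle i\rangle$ with $|G:G'|=2$, and proves $\OO^{2'}(G')$ is a $\pi_1$-group using Lemma~\ref{lem: Ito-Michler theorem for conjugacy classes} together with a short \emph{ad hoc} treatment of a possible $\SL_3(2)$ composition factor. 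Since $\Centralizer_{G'}(i)$ is a solvable subgroup of $\pi_2$-index containing a Hall $\pi_1$-subgroup of $G'$, $\OO^{2'}(G')$ is solvable, and the contradiction follows. So rather than a case analysis over all almost simple groups, the argument hinges on the single involution $i$ and the coprimality it creates; your proposed ``technical heart'' is never needed.
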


We next study in more detail the real class sizes of finite groups with disconnected prime graph on real class sizes.

\begin{thmB}
Let $G$ be a finite group. Suppose that $\Delta^*(G)$ is disconnected. Then $2$ divides some real class size and one of the following holds.
\begin{enumerate}
\item[$(1)$] $G$ has a normal Sylow $2$-subgroup. 

\item[$(2)$] $\Delta^*(\OO^{2'}(G))$ is disconnected and the real class sizes of $\OO^{2'}(G)$ are either odd or powers of $2$.
\end{enumerate}
\end{thmB}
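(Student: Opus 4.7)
The plan is to combine Theorem~A with the two-component bound on $\Delta^*(G)$ from \cite{DNT}, reducing to the case when $G$ is solvable and $\Delta^*(G)$ has exactly two connected components, which I denote $\pi_1$ and $\pi_2$. I would first establish that $2 \in \rho^*(G)$ by contradiction: if $2$ divided no real class size, the result of \cite{DNT} cited in the introduction would give a normal Sylow $2$-subgroup $S$ with $\Real(S) \subseteq \Center(S)$, and every real element of $G$ would centralize $S$ (its class size being odd forces $\Centralizer_G(x)$ to contain a full Sylow $2$-subgroup). An analysis of the $G$-centralizers of these real elements, modulo $S$ and inside the odd-order solvable quotient $G/S$, should show that $\Delta^*(G)$ must be connected, contradicting the hypothesis.

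Given $2 \in \rho^*(G)$, if $G$ has a normal Sylow $2$-subgroup conclusion (1) holds. Otherwise set $N := \OO^{2'}(G)$, a proper normal subgroup of $G$ with $G/N$ of odd order. Any $y \in N$ that is real in $N$ is also real in $G$, and the index $|y^G|/|y^N|$ divides $[G:N]$; since $[G:N]$ is odd, $|y^N|$ and $|y^G|$ have the same $2$-part. The central claim is that every real class size of $N$ is either odd or a power of $2$. The primes dividing $|y^G|$ lie in a single component of $\Delta^*(G)$; if they lie in $\pi_2$ (the component not containing $2$), then $|y^G|$ is odd, hence so is $|y^N|$. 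If they lie in $\pi_1$, I need to rule out odd primes in $\pi_1$ from dividing $|y^N|$, leaving only the $2$-part. Once this claim is proved, disconnectedness of $\Delta^*(N)$ follows by pulling appropriate real classes down from $G$ so that $\rho^*(N)$ contains both $2$ and an odd prime, two vertices that cannot be joined by an edge in $\Delta^*(N)$ by the claim.

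The main obstacle is controlling the odd primes in $\pi_1 \setminus \{2\}$ that might divide $|y^N|$ without dividing $|y^G|$. Such a prime $r$ would contribute to $[G:N]$ rather than to $|G:\Centralizer_G(y)|$, so it arises through the fusion of $N$-classes into $G$-classes; a Frattini-type argument, combined with the bipartite structure of $\Delta^*(G)$ and the existence of some real $\pi_2$-class of $G$, should show that this fusion forces a real class of $G$ whose size is divisible by both $r$ and a $\pi_2$-prime, yielding a forbidden edge in $\Delta^*(G)$. Tracking this fusion through the coprime odd-order quotient $G/N$ is the delicate part of the argument.
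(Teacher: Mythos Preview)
Your plan has two genuine gaps.

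First, the argument that $2\in\rho^*(G)$ is only a sketch. When $S\unlhd G$ and $\Real(S)\subseteq\Center(S)$, all real elements of $G$ lie in $E:=\Omega_1(\Center(S))$, and you want to show that two real classes $x^G,y^G$ with coprime sizes force a real class whose size is divisible by primes from both components. Your ``analysis of centralizers in $G/S$'' does not say how to produce such a class. The paper's Lemma~\ref{lem:2 is a vertex} does this by viewing $E$ as a completely reducible $\mathbb F_2[G/\Centralizer_G(E)]$-module and invoking \cite{DGPS} to get $|(xy)^G|=|x^G|\cdot|y^G|$; without that (or an equivalent) input the step is incomplete.

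Second, and more seriously, your treatment of $N=\OO^{2'}(G)$ is built on a confusion about divisibility. Since $|y^N|$ divides $|y^G|$ for $y\in N$, there are no primes that divide $|y^N|$ but not $|y^G|$; the ``obstacle'' you isolate in your last paragraph is a phantom, and the Frattini-type fusion argument you propose has nothing to act on. The actual obstacle is the opposite: when $\pi(|y^G|)\subseteq\pi_1$, odd primes of $\pi_1$ dividing $|y^G|$ may well survive in $|y^N|$, and nothing in your outline excludes this. The paper handles part~(2) in the reverse order: it first shows $\Delta^*(N)$ is disconnected directly (Proposition~\ref{prop:normal and quotient}(1), using the involution $i$ of Lemma~\ref{lem:2-closed}), and only then proves the class-size dichotomy inside $N$ (Theorem~\ref{th: 2 powers or odd}). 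The key tool you are missing for the latter is Lemma~\ref{lem:real element of order p} (\cite{DMN}): since $N=\OO^{2'}(N)$ is solvable by Theorem~A, for every odd prime $p\mid |N|$ there is a real element $x\in N$ of order $p$; multiplying it by the involution $i$ produces a real element whose class size links $2$ to $\pi_2$, forcing the component containing $2$ in $\Delta^*(N)$ to be $\{2\}$ alone.
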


It follows from Theorem B that if the prime graph $\Delta^*(G)$ of a finite group $G$ is disconnected, then $2$ must be a vertex of $\Delta^*(G)$. This confirms once again  the importance of the prime $2$ in the study of real conjugacy classes of finite groups.
In the second conclusion of Theorem B, both connected components of $\Delta^*(\OO^{2'}(G))$ are complete and one of the components  of this graph contains the prime $2$ only. (See Theorem \ref{th: 2 powers or odd}). 
  We should mention that it was proved in \cite{DPS} that a finite group whose all real class sizes are either odd or powers of $2$  is solvable.  However, in the proof of (2), we will need the solvability from Theorem A. So, if one can prove  part (2) of Theorem B without using  the solvability of the group, then we would have another proof of Theorem A.

In \cite{DPS}, it is shown that if all non-central real  conjugacy classes of a group have prime sizes, then the group has a normal Sylow $2$-subgroup or a normal $2$-complement. The next example shows that this is not the case if we only assume that all real class sizes are prime powers. 

\begin{ex} Let  $G=\Alt_4:C_4$ be a solvable group of order $48$. We have $G=\OO^{2'}(G)$, $G/\Center(G)\cong\Sym_4,G/\OO_2(G)\cong\Sym_3$ and the real class sizes of $G$ are $1,3$ or $8$. Clearly, $G$ has no normal Sylow $2$-subgroup nor normal $2$-complement. 
\end{ex}

It follows from \cite{BHM} that if the prime graph defined on all class sizes of a finite group $G$ is disconnected, then $G/\Center(G)$ is a Frobenius group with abelian kernel and complement. By our example above, this does not hold for the prime graph on real class sizes.

In our last result, we provide further evidence for a conjecture proposed in \cite{Tong}. We will prove Conjecture C in \cite{Tong} under some condition on the Sylow $2$-subgroups.

\begin{thmC}
Let $G$ be a finite group. Suppose that the sizes of all non-central real conjugacy classes of $G$ have the same $2$-part. Assume further that $G$ has a Sylow $2$-subgroup $S$ with $\Real(S)\subseteq \Center(S)$. Then $G$ is solvable and $\OO^{2'}(G)$ has a normal $2$-complement. 
\end{thmC}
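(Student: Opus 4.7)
The plan is to split by the common $2$-part $2^a$ of the non-central real class sizes, handling solvability and the normal $2$-complement in turn. Let $S$ be the Sylow $2$-subgroup with $\Real(S)\subseteq \Center(S)$.

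If $a=0$, no real class size of $G$ is divisible by $2$, so $2\notin\rho^*(G)$. The equivalence from \cite{DNT} recalled in the introduction then forces $G$ to have a normal Sylow $2$-subgroup, necessarily $S$. Since $G/S$ has odd order, Feit--Thompson yields solvability; and as $S$ is the unique Sylow $2$-subgroup, $\OO^{2'}(G)=S$, a $2$-group, which trivially admits a normal $2$-complement.

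Assume henceforth $a\geq 1$. The critical use of the Sylow hypothesis is to show $\Omega_1(S)\subseteq \Center(G)$. Any involution $t\in S$ is self-inverse, hence real in $S$, so $t\in\Real(S)\subseteq \Center(S)$; thus $S\leq \Centralizer_G(t)$ and $|t^G|$ is odd. But $t$ is real in $G$, and if $t\notin\Center(G)$ the hypothesis would force $|t^G|$ to have $2$-part $2^a\geq 2$, a contradiction. Combined with Sylow conjugacy and the pointwise fixedness of $\Center(G)$ under conjugation, this shows every involution of $G$ lies in $\Center(G)$.

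For solvability, observe that $[G:\OO^{2'}(G)]$ is odd, so $|x^G|/|x^{\OO^{2'}(G)}|$ is odd for every real $x\in \OO^{2'}(G)$; the uniform $2$-part hypothesis therefore descends to $\OO^{2'}(G)$, and I may replace $G$ by $\OO^{2'}(G)$ to assume $G$ is generated by $2$-elements. Arguing by minimal counterexample, the combined conditions $\Omega_1(S)\leq \Center(G)$ and uniform $2$-part $2^a$ on non-central real class sizes should preclude any non-abelian composition factor. The natural route proceeds via the classification of finite simple groups, adapting the techniques of \cite{GNT} and \cite{Tiep} on the real class structure of simple groups to produce, in any would-be non-abelian simple section, two real elements whose class sizes must have distinct $2$-parts, contradicting the hypothesis; alternatively one exhibits an auxiliary section whose prime graph on real class sizes is disconnected and invokes Theorem~A. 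This solvability step is the main obstacle of the proof.

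Once $G$ is known to be solvable, to conclude that $\OO^{2'}(G)$ has a normal $2$-complement I would apply the Glauberman--Thompson normal $p$-complement theorem (or equivalently Frobenius' criterion) at $p=2$ to $\OO^{2'}(G)$. The centrality $\Omega_1(S)\subseteq \Center(G)$ together with the uniform $2$-part of non-central real class sizes provides enough control over the $2$-local subgroups $\Centralizer_{\OO^{2'}(G)}(\Center(S))$ and $\N_{\OO^{2'}(G)}(\Center(J(S)))$ to make each $2$-nilpotent, whence $\OO^{2'}(G)$ itself is $2$-nilpotent.
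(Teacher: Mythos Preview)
Your handling of the case $a=0$ and your observation that $\Omega_1(S)\subseteq\Center(G)$ when $a\geq 1$ are both correct, and the latter is indeed the key starting point. However, both the solvability step and the $2$-nilpotence step remain genuinely incomplete, and the paper's argument supplies the missing ideas rather differently from what you sketch.

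For solvability, the paper neither adapts the simple-group case analysis of \cite{GNT}/\cite{Tiep} nor manufactures a disconnected section to feed into Theorem~A; neither route is easy to make precise here. Instead, one first strengthens your centrality observation: using $\Real(S)\subseteq\Center(S)$ one shows that \emph{every} nontrivial real $2$-element of $G$ (not merely the involutions) is a central involution, and then, via the odd-order lifting of real elements, that the same holds in $\overline{G}=G/\OO_{2'}(G)$. Since all involutions of $\overline{G}$ are central and $\OO_{2'}(\overline{G})=1$, Griess's theorem \cite{Griess} applies: the last term of the derived series of $\overline{G}$ is a direct product of copies of $\SL_2(q)$ ($q\geq 5$ odd) and $2\cdot\Alt_7$. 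Each such factor contains a real $2$-element of order at least $4$ (the generator $\beta$ of the cyclic index-$2$ subgroup of the generalized quaternion Sylow $2$-subgroup for $\SL_2(q)$; an element read off from \cite{Atlas} for $2\cdot\Alt_7$), which by the strengthened centrality fact would have to be a central involution---contradiction. Note that your weaker statement ``all involutions are central'' is precisely the input to Griess, but is by itself insufficient to eliminate $\SL_2(q)$ and $2\cdot\Alt_7$, since in those groups the unique involution \emph{is} central; the stronger control on real $2$-elements of higher order is what finishes the job.

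For the $2$-nilpotence of $\OO^{2'}(G)$, the Glauberman--Thompson route you propose is not what the paper does, and it is not clear how your hypotheses yield $2$-nilpotence of $\N_G(\Center(J(S)))$ directly. After reducing to $G=\OO^{2'}(G)$, the paper again works in $\overline{G}=G/\OO_{2'}(G)$: a Hall $2'$-subgroup $\overline{H}$ centralizes every real element of order at most $4$ in $\overline{P}=\OO_2(\overline{G})$ (all such elements being central by the strengthened lemma above), so by the Isaacs--Navarro fixed-element criterion \cite[Theorem~B]{IN2} $\overline{H}$ centralizes $\overline{P}$; then Hall--Higman~1.2.3 gives $\overline{H}\leq\Centralizer_{\overline{G}}(\overline{P})\leq\overline{P}$, forcing $\overline{H}=1$ and hence $\overline{G}$ a $2$-group.
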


A conjecture due to G. Navarro states that a finite group $G$ is solvable if $G$ has at most two real class sizes. Clearly, our Theorem C implies this conjecture with an additional assumption that  $\Real(S)\subseteq\Center(S)$ for some Sylow $2$-subgroup $S$ of $G$.  Finite $2$-groups $S$ with $\Real(S)\subseteq\Center(S)$ have been studied by Chillag and Mann \cite{CM}. These are exactly the  finite $2$-groups $S$ for which if $x,y\in S$ and $x^2=y^2$, then $x\Center(S)=y\Center(S)$.

\section{Real conjugacy classes}
Our notation are more or less standard. If $n$ is a positive integer, then $\pi(n)$ is the set of prime divisors of $n$. If $\pi(n)\subseteq \sigma$ for some set of primes $\sigma$, then $n$ is said to be a $\sigma$-number.  If $n>1$ is an integer and $p$ is a prime, then the $p$-part of $n$, denoted by $n_p$, is the largest power of $p$ dividing $n$.

Recall that $\Real(G)$ is the set of all real elements of $G$. 
We collect some properties of real elements and real class sizes in the following lemma.

\begin{lem}\label{lem: real elements}  Let $G$ be a finite group and let $N\unlhd G$.

\begin{enumerate}
\item[$(1)$] If $x\in \Real(G)$, then every power of $x$ is also real.

\item[$(2)$] If $x\in \Real(G)$, then $x^t=x^{-1}$ for some $2$-element $t\in G$.
\item[$(3)$] If $x\in \Real(G)$ and $|x^G|$ is odd, then $x^2=1.$
\item[$(4)$] If $x,y\in \Real(G)$, $xy=yx$ and $(|x^G|,|y^G|)=1$, then $xy\in \Real(G)$. Furthermore, if $(o(x),o(y))=1$, then $\pi(|x^G|)\cup\pi(|y^G|)\subseteq \pi(|(xy)^G|)$.
\item[$(5)$] If $|G:N|$ is odd, then $\Real(G)=\Real(N)$.

\item[$(6)$] Suppose that $Nx$ is a real element in $G/N$. If $|N|$ or the order of $Nx$ in $G/N$ is odd, then $Nx=Ny$ for some real element $y\in G$ (of odd order if the order of $Nx$ is odd).
\end{enumerate}
\end{lem}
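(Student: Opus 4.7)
My plan is to verify the six parts in sequence, using standard real-element machinery.

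Parts (1) and (2) are formal: the identity $(x^k)^g = (x^g)^k = (x^{-1})^k = (x^k)^{-1}$ gives (1), while for (2) I write $o(g) = 2^a m$ with $m$ odd and take $t := g^m \in \langle g \rangle$, which is a $2$-element and still inverts $x$ because $m$ is odd. For (3), the inversion map $\iota : x^G \to x^G$, $y \mapsto y^{-1}$, is a well-defined involution on the real class $x^G = (x^{-1})^G$; its fixed points are the involutions in $x^G$, and their count shares the parity of $|x^G|$. Hence $|x^G|$ odd forces an involution conjugate to $x$, giving $x^2 = 1$.

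Part (4) uses the standard fact that coprimality of $|G:C_G(x)|$ and $|G:C_G(y)|$ yields $G = C_G(x) C_G(y)$. Writing an inverter of $x$ as $c_1 c_2$ with $c_1 \in C_G(x)$, $c_2 \in C_G(y)$ exhibits a $c_2 \in C_G(y)$ inverting $x$; symmetrically some $c_3 \in C_G(x)$ inverts $y$, and then $c_3 c_2$ inverts both, giving $(xy)^{c_3 c_2} = (xy)^{-1}$ via $xy = yx$. The prime-divisor addendum follows from $(o(x), o(y)) = 1 \Rightarrow C_G(xy) = C_G(x) \cap C_G(y)$. For (5), $\Real(N) \subseteq \Real(G)$ is immediate; conversely, odd $|G:N|$ places every Sylow $2$-subgroup of $G$ inside $N$, so the $2$-element inverter from (2) lies in $N$, while $Nx$ is a real element of the odd-order group $G/N$ and therefore trivial by (3) applied in $G/N$, giving $x \in N$.

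For (6) I first apply (2) in $G/N$ to find a $2$-element $\bar g = Ng$ inverting $Nx$ and replace $g$ by its $2$-part to assume $g \in G$ is itself a $2$-element. The main device is the map $f : xN \to xN$, $f(y) := (y^g)^{-1}$, which is well-defined since $x^g \in x^{-1}N$ and satisfies $f^2(y) = y^{g^2}$; thus $f$ has $2$-power order on $xN$ and every $\langle f \rangle$-orbit has $2$-power length. If $|N|$ is odd then $|xN| = |N|$ is odd and orbit counting yields a fixed point $y$ with $y^g = y^{-1}$. If instead the order of $Nx$ is odd, then $(Nx)_2 = 1$ forces $x_2 \in N$, so $Nx = Nx_{2'}$; replacing $x$ by $x_{2'}$ we may assume $x$ has odd order, and it suffices to produce any real $y \in xN$ in $G$, for its $2'$-part is then real of odd order in $xN$ by (1). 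For this I would Frattini-reduce using $P \in \Syl_2(N)$ and $G = N \cdot N_G(P)$: after moving $x$ and $g$ into $R := N_G(P)$, in $R/P$ the new normal subgroup $N_N(P)/P$ has odd order, so the $|N|$-odd case just handled gives a real $\tilde y \in R/P$ of odd order in the coset corresponding to $Nx$. Lifting $\tilde y$ to a $2'$-element $y \in R$ and choosing a $2$-element $h \in R$ with $y^h \in y^{-1}P$, the Hall-conjugacy theorem applied in the solvable group $P \langle y \rangle$ shows that $\langle y \rangle$ and $\langle y^{h^{-1}} \rangle$ are conjugate by some $a \in P$, whence $y^{h^{-1}} = (y^{-1})^a$ and so $ah$ inverts $y$.

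The main obstacle is this final lifting step in the case where only the order of $Nx$ is odd: when $|N|$ is even, the orbit count on $xN$ alone is insufficient, and the delicate ingredient is the Hall-conjugacy argument inside $P \langle y \rangle$ that promotes ``real modulo $P$'' to ``real in $G$''. The remaining parts are routine applications of class counting, coset arithmetic, and Sylow theory.
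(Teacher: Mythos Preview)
Your arguments for (1) and (2) are verbatim the paper's. For (3)--(6) the paper does not give proofs at all: it simply cites \cite[Lemma~6.3]{DNT} for (3)--(5) and \cite[Lemma~2.2]{GNT} for (6). So your proposal is strictly more than what the paper does, and the question is only whether your self-contained arguments are sound. They are.

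Parts (3)--(5) are the standard proofs: the inversion involution on $x^G$ for (3), the factorisation $G=\Centralizer_G(x)\Centralizer_G(y)$ for (4) together with $\Centralizer_G(xy)=\Centralizer_G(x)\cap\Centralizer_G(y)$ when the orders are coprime, and the combination of (2) and (3) for (5).

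For (6), your treatment of the odd-$|N|$ case via the bijection $f(y)=(y^g)^{-1}$ with $f^2(y)=y^{g^2}$ is exactly the usual orbit-counting argument. In the case where only $o(Nx)$ is odd, the Frattini reduction to $R=\N_G(P)$ with $P\in\Syl_2(N)$ is legitimate: since $G=NR$ you may choose the coset representative for $Nx$ inside $R$, and in $R/P$ the image of $R\cap N$ has odd order, so the first case applies there. The final lift---promoting ``$Py$ real in $R/P$'' to ``$y$ real in $R$'' for $y$ of odd order and $P$ a normal $2$-subgroup---is correctly handled by Schur--Zassenhaus conjugacy of complements in $P\langle y\rangle$: both $\langle y\rangle$ and $\langle y^{h^{-1}}\rangle$ are complements to $P$, hence $P$-conjugate, and comparing images modulo $P$ forces the conjugating automorphism to be inversion. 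One small cosmetic point: your phrase ``moving $x$ and $g$ into $R$'' is shorthand for replacing $x$ by any $x'\in R\cap Nx$ (which exists since $G=NR$); the element $g$ itself is not needed after the reduction, since you re-invoke part (2) inside $R/P$ to obtain a fresh $2$-element inverter $h\in R$.
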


\begin{proof} Let $x\in G$ be a real element. Then $x^g=x^{-1}$ for some $g\in G$. If $k$ is any integer, then $(x^k)^g=(x^g)^k=(x^{-1})^k=(x^{k})^{-1}$ so $x^k$ is real which proves (1). Write $o(g)=2^am$ with $(2,m)=1$ and let $t=g^m$. Then $t$ is a $2$-element and $x^t=x^{g^m}=x^g=x^{-1}$ as $g^2\in\Centralizer_G(x)$  and $m$ is odd. This proves (2).

 Parts (3)--(5) are in Lemma~6.3 of \cite{DNT}.  Finally, (6) is Lemma 2.2 in \cite{GNT}.
\end{proof}

Fix $1\neq x\in \Real(G)$, set $\Centralizer^*_G(x)=\{g\in G\:|\:x^g\in \{x,x^{-1}\}\}.$ Then $\Centralizer^*_G(x)$ is a subgroup of $G$ containing $\Centralizer_G(x)$. If $g\in G$ such that $x^g=x^{-1}$, then $x^{g^2}=x$ so $g^2\in\Centralizer_G(x)$. Assume that $x$ is not an involution. We see that $g\in\Centralizer_G^*(x)\setminus \Centralizer_G(x)$  and if $h\in \Centralizer_G^*(x)\setminus \Centralizer_G(x)$, then $x^h=x^{-1}=x^g$ so that $hg^{-1}\in\Centralizer_G(x)$ or equivalently $h\in\Centralizer_G(x)g$. Thus $\Centralizer_G(x)$ has index $2$ in $\Centralizer_G^*(x)$ and hence $|x^G|$ is even. In particular, this is the case if $x$ is a nontrivial real element of odd order.

The next lemma is well-known. We will use this lemma freely without further reference.

\begin{lem}\label{lem:div} Let $G$ be a finite group and let $N\unlhd G$. Then

\begin{enumerate}
\item[$(1)$] If $x\in N$, then $|x^N|$ divides $|x^G|$.

\item[$(2)$] If $Nx\in G/N$, then $|(Nx)^{G/N}|$ divides $|x^G|$.
\end{enumerate}
\end{lem}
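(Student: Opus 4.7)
The plan is to prove both parts by orbit-stabilizer together with the second isomorphism theorem; these are standard observations and the argument for each part occupies only a few lines.

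For part (1), I would write $|x^N| = [N : \Centralizer_N(x)]$ and $|x^G| = [G : \Centralizer_G(x)]$, noting that $\Centralizer_N(x) = \Centralizer_G(x) \cap N$. The key step is that $N\Centralizer_G(x)$ is a subgroup of $G$ (since $N \trianglelefteq G$), so by the second isomorphism theorem
\[
[N\Centralizer_G(x) : \Centralizer_G(x)] \;=\; [N : N \cap \Centralizer_G(x)] \;=\; |x^N|.
\]
On the other hand, $[N\Centralizer_G(x) : \Centralizer_G(x)]$ divides $[G : \Centralizer_G(x)] = |x^G|$ because $N\Centralizer_G(x) \le G$. Combining gives $|x^N| \mid |x^G|$.

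For part (2), I would observe that if $g \in \Centralizer_G(x)$, then $g$ centralizes $x$ and hence the coset $Ng$ centralizes the coset $Nx$ in $G/N$, so $\Centralizer_G(x)N/N \le \Centralizer_{G/N}(Nx)$. Consequently
\[
|(Nx)^{G/N}| \;=\; [G/N : \Centralizer_{G/N}(Nx)] \;\text{ divides }\; [G/N : \Centralizer_G(x)N/N] \;=\; [G : N\Centralizer_G(x)],
\]
and this in turn divides $[G : \Centralizer_G(x)] = |x^G|$. There is no real obstacle here; the only care is to make sure that $\Centralizer_G(x)N/N$ is well-defined as a subgroup of $G/N$, which holds because $N$ is normal. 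The arguments rely only on basic subgroup arithmetic, so no nontrivial step is expected to be difficult.
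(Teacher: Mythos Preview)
Your proof is correct. The paper itself does not give a proof of this lemma, merely remarking that it is well-known and will be used without further reference; your argument via orbit--stabilizer and the second isomorphism theorem is precisely the standard justification one would supply.
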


The following lemma shows that a finite group $G$ has no nontrivial real element of odd order if and only if $G$ has a normal Sylow $2$-subgroup.

\begin{lem}\label{lem:even order real} \emph{(\cite[Proposition 6.4]{DNT}).} The following are equivalent:
\begin{enumerate}
\item[$(1)$] Every nontrivial element in $\Real(G)$ has even order.
\item[$(2)$] Every element in $\Real(G)$ is a $2$-element.
\item[$(3)$] $G$ has a normal Sylow $2$-subgroup.
\end{enumerate}
\end{lem}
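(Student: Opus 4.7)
The equivalences (1)$\Leftrightarrow$(2) and the implication (3)$\Rightarrow$(2) are elementary. (2)$\Rightarrow$(1) is immediate; for (1)$\Rightarrow$(2), if $x\in\Real(G)$ has order $2^am$ with $m$ odd then $x^{2^a}$ is real by Lemma~\ref{lem: real elements}(1) and has odd order $m$, so (1) forces $m=1$ and $x$ is a $2$-element. For (3)$\Rightarrow$(2), pass to the odd-order quotient $G/S$, where $S$ is the normal Sylow $2$-subgroup: Lemma~\ref{lem: real elements}(3) forces every real element of $G/S$ to square to $1$ and hence to be trivial, so every $x\in\Real(G)$ lies in $S$.

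The substantive direction is (1)$\Rightarrow$(3), which I would prove by induction on $|G|$ via three reductions. \emph{Step 1.} If $N:=\OO_{2'}(G)\neq 1$, Lemma~\ref{lem: real elements}(6) (with $|N|$ odd) shows that $G/N$ inherits (1), so by induction $G/N$ has a normal Sylow $2$-subgroup $H/N$, giving $H=NP$ for $P$ a Sylow $2$-subgroup of $G$. The coprime action of $P$ on $N$ must be trivial: for otherwise some $\tau\in P$ induces a non-trivial involutive automorphism of $N$, and the map $n\mapsto n^{-1}n^\tau$ takes values in the set of $\tau$-inverted elements and is non-constant, producing a non-trivial $n\in N$ with $n^\tau=n^{-1}$ --- a real element of odd order in $G$ contradicting (1). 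Hence $H=N\times P$, $P$ is characteristic in $H\unlhd G$, and $P\unlhd G$. \emph{Step 2.} If $\OO_{2'}(G)=1$ but $\OO_2(G)\neq 1$, pick a minimal normal subgroup $M\le\Center(\OO_2(G))$, which is elementary abelian $2$. Lemma~\ref{lem: real elements}(6) (now using the odd order of the element, not of $M$) transports (1) to $G/M$, whose inductive normal Sylow $2$-subgroup pulls back to a normal $2$-subgroup of $G$ of odd index, hence a normal Sylow $2$-subgroup of $G$.

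\emph{Step 3.} Assuming $G\neq 1$ (the base case being trivial), we are reduced to $\Fit(G)=\OO_2(G)\,\OO_{2'}(G)=1$, so $\Fit^*(G)=E(G)\neq 1$, and every component of $E(G)$ has trivial center (any such center would lie in $\Fit(G)=1$) and is therefore a non-abelian simple group. The plan here is to invoke the fact that every non-abelian finite simple group contains a non-trivial real element of odd order; such an element is automatically real in $G$ of odd order $>1$, contradicting (1). The main obstacle is precisely this last step, which rests on the classification of finite simple groups --- I am not aware of a CFSG-free proof that every non-abelian simple group has a real element of odd order. The complete argument is given as \cite[Proposition~6.4]{DNT}.
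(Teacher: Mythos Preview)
The paper does not give its own proof of this lemma; it simply records the statement and cites \cite[Proposition~6.4]{DNT}. Your proposal is a correct self-contained argument along the lines of that reference. The equivalences $(1)\Leftrightarrow(2)$ and $(3)\Rightarrow(2)$ are handled cleanly, and your inductive reductions in Steps~1 and~2 are standard and sound: in Step~1 the point that a nontrivial coprime action of a $2$-group on an odd-order group $N$ produces a $\tau$-inverted (hence real) nontrivial element of $N$ is exactly the right mechanism, and in Step~2 you correctly invoke the ``odd order of $Nx$'' clause of Lemma~\ref{lem: real elements}(6) to transport hypothesis~(1) to $G/M$. Step~3 then isolates the genuine content --- the CFSG-dependent fact that every non-abelian finite simple group has a nontrivial real element of odd order --- which is indeed how \cite{DNT} finishes. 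So your write-up goes beyond what the paper itself supplies, and is correct.
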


The next lemma determines the number of connected components of the prime graphs on  real class sizes.
\begin{lem}\label{lem:real  prime graph components}\emph{(\cite[Theorem ~6.2]{DNT}).}
For any finite group $G$, $\Delta^*(G)$ has at most two connected components.
\end{lem}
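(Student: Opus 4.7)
The plan is to reduce everything to the behavior of real involutions with odd real class sizes, and then to show that all odd primes in $\rho^*(G)$ either lie in the connected component of $2$ or form (together) a single extra component. This uses the power decomposition of a real element into its $2$-part and $2'$-part, and the fact (noted in the paragraph following Lemma~\ref{lem: real elements}) that every nontrivial real element of odd order has even class size.

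First I would isolate odd vertices. Let $p$ be an odd prime in $\rho^*(G)$ and pick a real $x$ with $p \mid |x^G|$. Write $x = su$ where $s$ has odd order, $u$ is a $2$-element, and $su = us$. By Lemma~\ref{lem: real elements}(1), $s$ and $u$ are both real. Since $C_G(x) = C_G(s) \cap C_G(u)$, we have $|x^G| \mid |s^G|\cdot|u^G|$, so $p$ divides $|s^G|$ or $|u^G|$. If $p \mid |s^G|$, then $s \neq 1$; since $s$ is a nontrivial real element of odd order, $|s^G|$ is even, and the class $s^G$ realises an edge between $p$ and $2$. Otherwise $p \mid |u^G|$: if $|u^G|$ is even we again get an edge between $p$ and $2$ through $u^G$; if $|u^G|$ is odd, Lemma~\ref{lem: real elements}(3) forces $u^2 = 1$, so $u$ is a real involution with $p \mid |u^G|$. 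This establishes the dichotomy: every odd $p \in \rho^*(G)$ is either adjacent to $2$ in $\Delta^*(G)$, or is witnessed by a real involution $t_p$ with $|t_p^G|$ odd and $p \mid |t_p^G|$.

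Next I would use Sylow theory on such involutions. If $|t_p^G|$ is odd, then $C_G(t_p)$ contains a full Sylow $2$-subgroup, and up to conjugation we may put all the involutions $t_p$ inside the centre of a fixed Sylow $2$-subgroup $S$. Thus if $p, q$ are two odd primes witnessed by involutions $t_p, t_q \in Z(S)$, then $t_p$ and $t_q$ commute. Let $y = t_p t_q$; it is either trivial (when $t_p = t_q$, and then $pq \mid |t_p^G|$ directly exhibits the edge $p \sim q$) or a real involution whose centraliser contains $C_G(t_p) \cap C_G(t_q)$. A coprime-action argument on a Sylow $p$-subgroup $P$ of $G$ (acting by conjugation, with $|t_p^G|$ forcing some $P$-orbit on $t_p^G$ of length divisible by $p$, and similarly for $q$) then shows that both $p$ and $q$ divide $|y^G|$, giving the edge $p \sim q$. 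Consequently all odd primes arising from odd-class involutions form a clique, sitting in at most one component.

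Combining the two cases shows $\Delta^*(G)$ splits into at most the component of $2$ (if $2 \in \rho^*(G)$) and one additional component of odd primes, proving the bound of two components. The main obstacle is the final connectivity step: the centraliser $C_G(y)$ may properly contain $C_G(t_p) \cap C_G(t_q)$, so one cannot simply compare indices. The real content is therefore showing that the $p$-part (respectively $q$-part) of $|C_G(y)|$ is still smaller than that of $|G|$; this is where the coprime action on the full conjugacy class $t_p^G$ (which has $p$-defect strictly less than that of $|G|$) is essential, and it is likely to be the most delicate part of the argument.
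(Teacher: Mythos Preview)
The paper does not prove this lemma: it is quoted verbatim from \cite[Theorem~6.2]{DNT} with no argument supplied. So there is nothing in the paper to compare your sketch against, and the comments below concern only the internal soundness of your outline.

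Your reduction step is correct and standard: decomposing a real $x$ into its $2'$-part $s$ and $2$-part $u$, one sees that every odd vertex $p$ of $\Delta^*(G)$ is either adjacent to $2$ or is witnessed by a real involution $t_p$ with $|t_p^G|$ odd, and (after conjugating) all such $t_p$ may be taken in $\Center(S)$ for one fixed $S\in\Syl_2(G)$.

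The genuine gap is your second step. You aim to prove the strong statement that any two such primes $p,q$ are \emph{adjacent} (so that the odd-class-involution primes form a clique), by showing $p,q\mid |y^G|$ for $y=t_pt_q$ via a ``coprime-action'' argument of a Sylow $p$-subgroup on $t_p^G$. As you yourself flag, this is precisely where the argument is not pinned down, and in that generality it cannot be: knowing $p\mid|t_p^G|$ gives no control over $P$-orbits on the conjugacy class of the \emph{different} involution $y$, nor over the $p$-part of $|\Centralizer_G(y)|$. Without some coprimality between $|t_p^G|$ and $|t_q^G|$ there is simply no leverage.

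The fix is to prove less and assume more. You do not need the clique property, only that the odd-class-involution primes lie in a single component; for this it suffices to derive a contradiction from the assumption that $\Delta^*(G)$ has three components $\pi_1,\pi_2,\pi_3$ with $2\notin\pi_2\cup\pi_3$. Pick $p\in\pi_2$, $q\in\pi_3$; your dichotomy then produces involutions $t_p,t_q\in\Center(S)$, and now $|t_p^G|$ is automatically a $\pi_2$-number and $|t_q^G|$ a $\pi_3$-number (every prime divisor of $|t_p^G|$ is adjacent to $p$), hence they are coprime and $t_p\neq t_q$. From $G=\Centralizer_G(t_p)\Centralizer_G(t_q)$ one gets that $|y^G|$ divides $|t_p^G|\cdot|t_q^G|$ for $y=t_pt_q$; since the prime divisors of $|y^G|$ are pairwise adjacent in $\Delta^*(G)$, they all lie in a single component, so $|y^G|$ is (say) a $\pi_2$-number. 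But then $t_q=t_p y$ gives $\Centralizer_G(t_q)\supseteq\Centralizer_G(t_p)\cap\Centralizer_G(y)$, whence $|t_q^G|$ divides $|t_p^G|\cdot|y^G|$, a $\pi_2$-number --- contradicting $q\in\pi_3$. This elementary index arithmetic replaces your undetermined coprime-action step and closes the argument.
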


If a finite group $G$ is of even order, then it has a real element of order $2$. If an odd prime $p$ dividing $|G|$, $G$ may not have a real element of order $p$. However, if $G$ has no proper normal subgroup of odd index and $G$ is $p$-solvable,  Dolfi, Malle and Navarro \cite{DMN} show that $G$ must contain a real element of order $p$.  

\begin{lem}\label{lem:real element of order p}\emph{(\cite[Corollary~B]{DMN}).}
Let $G$ be a finite group with $\OO^{2'}(G)=G$. Suppose that $p$ is an odd prime dividing $|G|$. If $G$ is $p$-solvable, then $G$ has a real element of order $p$.
\end{lem}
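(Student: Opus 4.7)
The plan is to induct on $|G|$. First, I would reduce to the case $\OO_{p'}(G)=1$: if $N:=\OO_{p'}(G)\neq 1$, then $G/N$ inherits all hypotheses of the lemma, so by induction it contains a real element $\bar y$ of order $p$. Since $p$ is odd, Lemma~\ref{lem: real elements}(6) lifts $\bar y$ to a real element $y\in G$ of odd order with $p$ dividing the order of $y$, and a suitable power of $y$ is a real element of order $p$ by Lemma~\ref{lem: real elements}(1).

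So we may assume $\OO_{p'}(G)=1$. Then by $p$-solvability (together with $p\mid|G|$), $P:=\OO_p(G)$ is a nontrivial normal $p$-subgroup with $\Centralizer_G(P)\leq P$. Put $V:=\Omega_1(\Center(P))$, a nontrivial elementary abelian $p$-subgroup normal in $G$, and consider the action of $G$ on $V$ with kernel $C:=\Centralizer_G(V)$, so that $G/C$ embeds in $\GL(V)$. The crux is the following: if some $g\in G$ has $-1$ as an eigenvalue on $V$, then $g$ inverts a nontrivial $v\in V$, and $v$ is the required real element of order $p$.

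So assume no such $g$ exists. Because $p$ is odd, any nontrivial involution of $\GL(V)$ decomposes $V=V^+\oplus V^-$ with $V^-\neq 0$ and hence has $-1$ as an eigenvalue; therefore $G/C$ contains no nontrivial involution, and by Cauchy $|G/C|$ is odd. Consequently every $2$-element of $G$ maps to the identity in $G/C$ and lies in $C$. Since $G=\OO^{2'}(G)$ is generated by its $2$-elements, $G=C$; that is, $V\leq \Center(G)$.

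Finally I would dispose of this central case. If $V<P$, then $p$ still divides $|G/V|$ and $G/V$ inherits the hypotheses of the lemma, so induction together with another application of Lemma~\ref{lem: real elements}(6) produces the required real element of order $p$ in $G$. Otherwise $V=P$, so $P\leq \Center(G)$ gives $G=\Centralizer_G(P)\leq P$, making $G$ a $p$-group of odd order; but then $G=\OO^{2'}(G)$ forces $G=1$, contradicting $p\mid|G|$. The main obstacle is the third paragraph: converting the mere non-existence of an inverting element into the structural conclusion $V\leq \Center(G)$, which turns on both the rigidity of involutions on an odd-characteristic module and the equivalence of $\OO^{2'}(G)=G$ with $G$ being generated by its $2$-elements.
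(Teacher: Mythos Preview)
The paper itself gives no proof of this lemma; it simply records it as \cite[Corollary~B]{DMN}. Your proposal, by contrast, is a correct self-contained argument, so you are supplying what the paper only cites.

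A few confirmations on the points you flagged. The reduction modulo $\OO_{p'}(G)$ and the lifting via Lemma~\ref{lem: real elements}(6) and (1) are sound; the divisibility $p\mid o(y)$ is automatic because $Ny$ has order $p$, so $p$ divides $|\langle y\rangle N/N|$, which divides $o(y)$. The central step is valid: over $\bbF_p$ with $p$ odd, $(g-1)(g+1)=0$ for any involution $g\in\GL(V)$, and since $2\in\bbF_p^\times$ this splits $V$ into the $(\pm 1)$-eigenspaces, so a nontrivial involution has a nonzero $(-1)$-eigenspace. Hence if no element of $G$ inverts a nontrivial $v\in V$, then $G/\Centralizer_G(V)$ has odd order. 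Your identification $\OO^{2'}(G)=\langle S:S\in\Syl_2(G)\rangle$ is standard (each side is contained in the other by definition of $\OO^{2'}$ and Sylow theory), so $G=\OO^{2'}(G)$ forces $V\leq\Center(G)$. The endgame is also fine: if $V<P$ then $p\mid |G/V|$ and induction plus another lift finishes; if $V=P$ then $G=\Centralizer_G(P)\leq P$ by Hall--Higman, so $G$ has odd order and $G=\OO^{2'}(G)=1$, contradicting $p\mid|G|$.
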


In the next two lemmas, we state the  It\^{o}-Michler theorem for real conjugacy classes.
\begin{lem}\label{lem:odd size}\emph{(\cite[Theorem~6.1]{DNT}).}
Let $G$ be a finite group and let $P$ be a Sylow $2$-subgroup of $G$. Then all real classes of $G$ have odd size if and only if $P\unlhd G$ and $\Real(P)\subseteq \Center(P)$.
\end{lem}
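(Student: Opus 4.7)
The plan is to prove the biconditional directly, each direction relying on the real-element toolkit of Lemma~\ref{lem: real elements}(2)--(3) together with Lemma~\ref{lem:even order real}, which identifies normality of a Sylow $2$-subgroup with the condition that every real element is a $2$-element.

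For sufficiency $(\Leftarrow)$, suppose $P \unlhd G$ and $\Real(P) \subseteq \Center(P)$, and let $x \in \Real(G)$. By Lemma~\ref{lem:even order real}, $x$ is necessarily a $2$-element, so $x \in P$ (the unique Sylow $2$-subgroup). Now invoke Lemma~\ref{lem: real elements}(2) to pick a $2$-element $t \in G$ with $x^t = x^{-1}$; since $P$ is the only Sylow $2$-subgroup of $G$, we have $t \in P$, whence $x \in \Real(P) \subseteq \Center(P)$. Therefore $P \le \Centralizer_G(x)$, and because $P$ is a full Sylow $2$-subgroup of $G$, the class size $|x^G| = [G : \Centralizer_G(x)]$ is odd.

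For necessity $(\Rightarrow)$, suppose every real class size of $G$ is odd. The contrapositive of Lemma~\ref{lem: real elements}(3) says that any non-identity real element $x$ satisfies $x^2 = 1$, hence is an involution and in particular a $2$-element, so Lemma~\ref{lem:even order real} forces $P \unlhd G$. For the second condition, take $x \in \Real(P)$. Since $P \le G$, the element $x$ is real in $G$ as well, so by hypothesis $|x^G|$ is odd; this means $\Centralizer_G(x)$ contains a Sylow $2$-subgroup of $G$, and uniqueness of $P$ gives $P \le \Centralizer_G(x)$, so $x \in \Center(P)$.

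The one delicate point is reconciling ``real in $P$'' with ``real in $G$'' for elements of $P$: a priori the former is stronger, since the inverter $g$ with $x^g = x^{-1}$ is required to live in $P$. Normality of $P$, combined with Lemma~\ref{lem: real elements}(2), bridges the gap---any $G$-inverter of a $2$-element in $P$ can be replaced by a $2$-element inverter, which must lie in the unique Sylow $2$-subgroup. Once this identification is in place, both halves of the equivalence assemble from the two referenced lemmas without further complication.
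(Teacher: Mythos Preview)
The paper does not give its own proof of this lemma; it simply cites \cite[Theorem~6.1]{DNT}. Your argument is correct and self-contained, relying only on Lemmas~\ref{lem: real elements} and~\ref{lem:even order real} from the present paper. One small wording slip: in the necessity direction you write ``the contrapositive of Lemma~\ref{lem: real elements}(3),'' but you are in fact applying that lemma directly (the hypothesis gives $|x^G|$ odd, and the lemma yields $x^2=1$); the contrapositive would read ``if $x^2\neq 1$ then $|x^G|$ is even.'' This is purely cosmetic and does not affect the validity of the proof.
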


\begin{lem}\label{lem: Ito-Michler theorem for conjugacy classes}
Let  $G$ be a finite group and $p$ be an odd prime. If $p=3$, assume in addition that $G$ has no composition factor isomorphic to $\SL_3(2)$. If $p$ does not divide $|x^G|$ for all real elements of $G$, then $G$ is $p$-solvable and $\OO^{p'}(G)$ is solvable. Furthermore, $\OO^{2'}(G)$ has a normal Sylow $p$-subgroup $P$ and $P'\leq \Center(\OO^{2'}(G))$.
\end{lem}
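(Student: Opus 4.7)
The plan is to combine the result of \cite{Tiep} on real classes in nonabelian simple groups with the real-class lifting techniques from \cite{GNT} and Lemma~\ref{lem:real element of order p} from \cite{DMN}. First I reduce to $H := \OO^{2'}(G)$: since $[G:H]$ is odd, Lemma~\ref{lem: real elements}(5) gives $\Real(G) = \Real(H)$, and because $|x^H|$ divides $|x^G|$ for $x \in H$, the hypothesis transfers to $H$; moreover the composition factors of $H$ lie among those of $G$, so the $\SL_3(2)$ restriction persists when $p=3$. The goal is to establish that $H$ has a normal Sylow $p$-subgroup $P$ with $P' \leq \Center(H)$; the remaining conclusions for $G$ will then follow using that $G/H$ has odd (hence solvable, by Feit--Thompson) order.

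To prove $H$ is $p$-solvable, I argue by induction on $|H|$ that $H$ has no nonabelian composition factor of order divisible by $p$. Let $N$ be a minimal normal subgroup of $H$. If $N$ is a direct product of copies of a nonabelian simple group $S$ with $p \mid |S|$, then by \cite{Tiep} (applicable because $S \neq \SL_3(2)$ when $p = 3$), $S$ contains a real element $s$ with $p \mid |s^S|$. The element $x = (s, 1, \ldots, 1) \in N$ is inverted inside the first simple factor, hence is real in $H$, and $|s^S| = |x^N|$ divides $|x^H|$, contradicting the hypothesis. Thus $N$ is either abelian or a nonabelian $p'$-product, so $N$ is $p$-solvable. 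The hypothesis transfers to $H/N$ by using Lemma~\ref{lem: real elements}(6) to lift real elements of odd order and the real-class lifting machinery of \cite{GNT} to handle the even-order case (passing through $\OO^{2'}(H/N)$), completing the induction.

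Next I produce the normal Sylow $p$-subgroup of $H$ and establish $P' \leq \Center(H)$. Assuming $p \mid |H|$ (otherwise the conclusion is trivial), Lemma~\ref{lem:real element of order p} supplies a real element $t \in H$ of order $p$; by hypothesis $|t^H|$ is coprime to $p$, so some Sylow $p$-subgroup $P$ of $H$ centralizes $t$, forcing $t \in \Center(P)$. The main obstacle is to globalize this local information: showing that one and the same $P$ works for every real $p$-element, that $P$ is normal in $H$, and that $[a,b] \in \Center(H)$ for all $a,b \in P$. This step follows the inductive framework of \cite{GNT}: minimal normal subgroups of $H$ are analyzed via the $p$-solvable structure just established, the Sylow-centrality of real $p$-elements is propagated through chief factors to yield a common $P$, and the commutator bound is extracted using the techniques in \cite{IN}. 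With $P \lhd H$ characteristic in $H$ and therefore normal in $G$, the solvability of $\OO^{p'}(G)$ follows by a short additional argument analyzing the series $P \leq \OO^{p'}(G) \cap H \leq \OO^{p'}(G)$, using that $H/P$ is a $p'$-group and $G/H$ is an odd-order solvable group. Completing the normality of $P$ and the commutator-centrality $P' \leq \Center(H)$ is thus the heart of the argument and the principal difficulty.
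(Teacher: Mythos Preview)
The paper's own proof is four lines of pure citation: $p$-solvability is Theorem~A of \cite{GNT}; Theorem~B of \cite{GNT} converts the class-size hypothesis into the statement that $p\nmid\chi(1)$ for every real-valued $\chi\in\Irr(G)$; Theorem~A of \cite{Tiep} then gives the solvability of $\OO^{p'}(G)$; and Theorem~A of \cite{IN} (applied after that conversion) yields the normal Sylow $p$-subgroup of $\OO^{2'}(G)$ together with $P'\le\Center(\OO^{2'}(G))$. No argument beyond invoking these four theorems is supplied or needed.

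Your proposal, by contrast, tries to \emph{re-derive} these theorems, and this is where it runs into trouble. First, you misattribute: you claim \cite{Tiep} supplies, for each nonabelian simple $S$ with $p\mid|S|$ (and $S\not\cong\SL_3(2)$ when $p=3$), a real element $s\in S$ with $p\mid|s^S|$. But \cite{Tiep} is about real \emph{character degrees}, not real class sizes; the simple-group class-size statement you need is part of the proof of Theorem~A in \cite{GNT} (compare Theorem~4.1 there), and the route through \cite{Tiep} requires first passing to characters via Theorem~B of \cite{GNT}. Second, your inductive step is not justified: when the minimal normal subgroup $N$ is a nonabelian $p'$-group it has even order, so Lemma~\ref{lem: real elements}(6) does not lift even-order real elements of $H/N$, and you dispose of this by appealing to ``the real-class lifting machinery of \cite{GNT}''---which is precisely the content of the theorem you are trying to prove. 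Third, your entire final paragraph is not a proof but a description of a proof: ``follows the inductive framework of \cite{GNT}'' and ``using the techniques in \cite{IN}'' are citations in disguise, and they cite exactly the results the paper cites directly.

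In short, the proposal is circular: at every hard step it collapses back onto the references \cite{GNT}, \cite{Tiep}, \cite{IN} that the paper simply quotes outright. If you want to give an independent argument you would need to actually carry out the simple-group analysis and the lifting arguments; otherwise the honest proof is the four-line citation the paper gives.
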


\begin{proof}
The first claim is Theorem A in \cite{GNT}. Now, Theorem B in that reference implies that $p$ does not divide $\chi(1)$ for all real-valued irreducible characters $\chi\in\Irr(G)$. By Theorem A in \cite{Tiep}, we know that $\OO^{p'}(G)$ is solvable. Finally, the last statement follows from Theorem A in \cite{IN}.
\end{proof}

\section{Proofs of Theorems A and B}

Let $G$ be a finite group. Suppose that $\Delta^*(G)$ is disconnected. Then $\Delta^*(G)$ has exactly two connected components by Lemma \ref{lem:real  prime graph components}. The following will be used frequently in our proofs.

\begin{lem}\label{lem:2-closed} Let $G$ be a finite group and suppose that $\Delta^*(G)$ has two connected components with vertex sets $\pi_1$ and $\pi_2$, where $2\not\in\pi_2$.  Then there exists an involution $i\in G$ such that $|i^G|>1$ is a $\pi_2$-number and $\Centralizer_G(i)$ has a normal Sylow $2$-subgroup. 
\end{lem}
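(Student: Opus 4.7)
The plan is to first produce an involution $i$ with $|i^G|$ a nontrivial $\pi_2$-number by exploiting Lemma~\ref{lem: real elements}(3), and then to show $\Centralizer_G(i)$ has no nontrivial real element of odd order, which by Lemma~\ref{lem:even order real} applied to $\Centralizer_G(i)$ is equivalent to $\Centralizer_G(i)$ having a normal Sylow $2$-subgroup.

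For the first step, fix any $q \in \pi_2$ (which is nonempty since $\Delta^*(G)$ has two components); by definition of $\rho^*(G)$ there is a real element $x \in G$ with $q \mid |x^G|$. Since $q$ lies in the component $\pi_2$ of $\Delta^*(G)$, every other prime divisor of $|x^G|$ is joined to $q$ by an edge and so also lies in $\pi_2$, giving $\pi(|x^G|) \subseteq \pi_2$. In particular $|x^G|$ is odd (as $2 \notin \pi_2$), so Lemma~\ref{lem: real elements}(3) forces $x^2 = 1$; since $|x^G| > 1$ rules out $x = 1$, we set $i := x$, an involution with $|i^G| > 1$ a $\pi_2$-number.

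Writing $C := \Centralizer_G(i)$, suppose toward contradiction that $y \in C$ is real in $C$ of odd order greater than $1$. Then $y$ is also real in $G$ (any $c \in C$ realising $y^c = y^{-1}$ lies in $G$), and by the remark immediately preceding Lemma~\ref{lem:div} a nontrivial real element of odd order has even class size, so $2 \mid |y^G|$. Hence $2 \in \rho^*(G)$; because $2 \notin \pi_2$, the prime $2$ lies in $\pi_1$, and all prime divisors of $|y^G|$ lie in the component containing $2$, yielding $\pi(|y^G|) \subseteq \pi_1$. Since $\pi_1 \cap \pi_2 = \emptyset$ we get $\gcd(|i^G|, |y^G|) = 1$, and $\gcd(o(i), o(y)) = 1$ since $o(i)=2$ and $o(y)$ is odd. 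The elements $i$ and $y$ commute and are both real, so Lemma~\ref{lem: real elements}(4) shows $iy$ is real with $\pi(|i^G|) \cup \pi(|y^G|) \subseteq \pi(|(iy)^G|)$. The left side meets both $\pi_1$ (via $2$) and $\pi_2$ (via any prime divisor of $|i^G|$), so the real class size $|(iy)^G|$ would produce an edge of $\Delta^*(G)$ between the two components, a contradiction.

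I anticipate no serious obstacle: the whole argument is a clean application of items~(1)--(4) of Lemma~\ref{lem: real elements} combined with the general fact that the prime divisors of a single real class size lie in one connected component of $\Delta^*(G)$. The one delicate point is verifying that $|y^G|$ must be a $\pi_1$-number (so that $|i^G|$ and $|y^G|$ are coprime and Lemma~\ref{lem: real elements}(4) is applicable); this in turn hinges only on the evenness of the class size of a nontrivial real element of odd order, noted in the paragraph before Lemma~\ref{lem:div}.
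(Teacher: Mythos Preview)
Your proof is correct and follows essentially the same approach as the paper's: both first produce the involution $i$ by taking any real element whose class size is divisible by a prime in $\pi_2$ and invoking Lemma~\ref{lem: real elements}(3), then rule out nontrivial real elements of odd order in $\Centralizer_G(i)$ via Lemma~\ref{lem: real elements}(4) to conclude with Lemma~\ref{lem:even order real}. Your write-up is slightly more explicit about why $|y^G|$ is a $\pi_1$-number (deriving $2\in\pi_1$ from $2\mid|y^G|$ and $2\notin\pi_2$), but the logic is identical.
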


\begin{proof} Let $p$ be a prime in $\pi_2$. Then $p$ must divide $|i^G|$ for some nontrivial real element $i\in G$.  Clearly, every prime divisor of $|i^G|$ is adjacent to $p\in\pi_2$, this implies that $|i^G|$ is a nontrivial $\pi_2$-number.
Hence $|i^G|>1$ is odd and so $i^2=1$ by Lemma \ref{lem: real elements}(3) and thus $i$ is an involution of $G$.
 
Assume that $\Centralizer_G(i)$ has a nontrivial real element $x$ of odd order. Then  $xi=ix$ and $|x^G|$ is even so $|x^G|$ is a $\pi_1$-number and thus $(|x^G|,|i^G|)=1$. Lemma \ref{lem: real elements}(4) implies that $xi$ is a real element. Furthermore,  since $(o(x),o(i))=1$,  $2p$ divides $|(ix)^G|$ by Lemma \ref{lem: real elements}(4) again. This means that $2\in \pi_1$ and $p\in\pi_2$ are adjacent in $\Delta^*(G)$, which is impossible. Therefore, $\Centralizer_G(i)$ has no nontrivial real element of odd order and thus it has a normal Sylow $2$-subgroup by Lemma \ref{lem:even order real}. 
\end{proof}

Notice that if $N\unlhd G$, then $\Delta^*(N)$ is a subgraph of $\Delta^*(G)$ by Lemma \ref{lem:div}(1) and the fact that $\Real(N)\subseteq \Real(G)$. However, in general, it is not true that $\Delta^*(G/N)$ is a subgraph of $\Delta^*(G)$. The involutions in $G/N$ might produce extra vertices as well as edges in $\Delta^*(G/N)$. However, this is the case if $|N|$ is odd.

\begin{lem}\label{lem:subgraph} Let $G$ be a finite group and let $N\unlhd G$ with $|N|$ odd. Then $\Delta^*(G/N)$ is a subgraph of $\Delta^*(G)$.
\end{lem}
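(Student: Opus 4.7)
The plan is to verify the two conditions defining a subgraph: every vertex of $\Delta^*(G/N)$ lies in $\rho^*(G)$, and every edge of $\Delta^*(G/N)$ is an edge of $\Delta^*(G)$. Both will follow from a single stronger statement: for every real class size $m$ of $G/N$, there is a real class size $m'$ of $G$ with $m \mid m'$. Divisibility then transports prime divisors (vertices) and products of pairs of primes (edges) from $\Delta^*(G/N)$ into $\Delta^*(G)$.

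The key tools are already available. First, given any real element $Nx \in G/N$, the hypothesis that $|N|$ is odd is exactly what is needed to invoke Lemma \ref{lem: real elements}(6), which produces a real element $y \in G$ with $Ny = Nx$. Second, Lemma \ref{lem:div}(2) yields $|(Ny)^{G/N}| \mid |y^G|$. Combining these, we obtain $|(Nx)^{G/N}| = |(Ny)^{G/N}|$ dividing the real class size $|y^G|$ of $G$, as desired.

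From this divisibility the subgraph property is immediate: if $p \in \rho^*(G/N)$, pick a real element $Nx$ with $p \mid |(Nx)^{G/N}|$, lift to $y$ as above, and conclude $p \mid |y^G|$, so $p \in \rho^*(G)$; if $p$ and $q$ are joined in $\Delta^*(G/N)$, pick $Nx$ with $pq \mid |(Nx)^{G/N}|$, the same lift gives $pq \mid |y^G|$, so $p$ and $q$ are joined in $\Delta^*(G)$.

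There is essentially no obstacle here: the proof is a two-step reduction that bundles Lemma \ref{lem: real elements}(6) with Lemma \ref{lem:div}(2). The only point worth noting is the necessity of the odd-order hypothesis on $N$, which is exactly the setting in which real elements of $G/N$ can be lifted to real elements of $G$; without it, a real coset $Nx$ need not meet $\Real(G)$ at all, and the argument would collapse.
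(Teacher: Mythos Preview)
Your proof is correct and follows essentially the same approach as the paper: lift a real element of $G/N$ to a real element of $G$ via Lemma~\ref{lem: real elements}(6) (using that $|N|$ is odd), then apply the divisibility in Lemma~\ref{lem:div}(2) to transport vertices and edges. The only cosmetic difference is that you package the argument as a single divisibility statement before unpacking it into the vertex and edge cases, whereas the paper treats the two cases in parallel.
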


\begin{proof}  We first show that $\rho^*(G/N)\subseteq\rho^*(G)$. Indeed, let $p\in\rho^*(G/N)$ and let $Nx\in G/N$ be a real element such that $p$ divides $|(Nx)^{G/N}|$. By Lemma \ref{lem: real elements}(6), there exists a real element $y\in G$ such that $Nx=Ny$. Since $|(Nx)^{G/N}|=|(Ny)^{G/N}|$ divides $|y^G|$, $p$ divides $|y^G|$, so $p\in\rho^*(G)$.

With a similar argument, we can show that if $p\neq q\in\rho^*(G/N)$ which are adjacent in $\Delta^*(G/N)$, then $p,q$ are adjacent in $\Delta^*(G)$ by using  Lemma \ref{lem: real elements}(6) again. Thus $\Delta^*(G/N)$ is a subgraph of $\Delta^*(G)$.
\end{proof}

Let $X$ be a subgroup or a quotient of a finite group $G$ and suppose that $\Delta^*(X)$ is a subgraph of $\Delta^*(G)$. Assume that $\Delta^*(G)$ is disconnected having two connected components with vertex sets $\pi_1$ and $\pi_2$, respectively.  To show that $\Delta^*(X)$ is disconnected, it suffices to show that $\rho^*(X)\cap\pi_i\neq\emptyset$ for $i=1,2$ or equivalently $X$ has two real elements $u_i,i=1,2$ which both lift to real elements of $G$ and $\pi(|u_i^X|)\cap\pi_i\neq\emptyset$ for $i=1,2$.

For a finite group $G$ and a prime $p,$ $G$ is said to be $p$-closed if it has a normal Sylow $p$-subgroup and it is $p$-nilpotent if it has a normal $p$-complement.

\begin{prop}\label{prop:normal and quotient} Let $G$ be a finite group.  Suppose that $\Delta^*(G)$ has two connected components with vertex sets $\pi_1$ and $\pi_2$ where $2\not\in\pi_2$.  Then
\begin{enumerate}
\item[$(1)$] If $G$ is not $2$-closed and  $M\unlhd G$ with $|G:M|$  odd, then $\Delta^*(M)$ is disconnected.

\item[$(2)$] If $N\unlhd G$ with $|N|$ odd and assume further that $G=\OO^{2'}(G)$ is not $2$-nilpotent, then $\Delta^*(G/N)$ is disconnected.
\end{enumerate}
\end{prop}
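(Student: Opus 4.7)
The plan is to handle (1) and (2) by the same template: if the graph on the smaller group were connected, then since it must contain the vertex $2$, its vertex set would lie inside the connected component $\pi_1$ of $\Delta^*(G)$; then for each $p\in\pi_2$ the involution supplied by Lemma~\ref{lem:2-closed} would turn out to centralize (or become central modulo $N$ with respect to) a real element $y$ of odd order, and applying Lemma~\ref{lem: real elements}(4) inside $G$ would force $2$ and $p$ to be adjacent in $\Delta^*(G)$, contradicting the separation of the components.

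For (1), $|G:M|$ odd gives $\Real(M)=\Real(G)$ by Lemma~\ref{lem: real elements}(5), and $G$ not $2$-closed gives, via Lemma~\ref{lem:even order real}, a nontrivial real $y$ of odd order; then $y\in M$, and the remark after Lemma~\ref{lem: real elements} shows $|y^M|$ is even, so $2\in\rho^*(M)$. Suppose $\Delta^*(M)$ is connected; as it is a subgraph of $\Delta^*(G)$ containing $2$, one has $\rho^*(M)\subseteq\pi_1$. For $p\in\pi_2$ pick $i_p$ as in Lemma~\ref{lem:2-closed}: $i_p\in\Real(G)=\Real(M)$, and $|i_p^G|$ is a $\pi_2$-number divisible by $p$. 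Since $|i_p^M|$ divides $|i_p^G|$ it is a $\pi_2$-number, and $\rho^*(M)\subseteq\pi_1$ forces $|i_p^M|=1$, i.e.\ $i_p\in\Center(M)$. Thus $i_p$ and $y$ commute in $G$; since $|y^G|$ is a $\pi_1$-number, $|i_p^G|$ is a $\pi_2$-number, and $(o(y),o(i_p))=1$, Lemma~\ref{lem: real elements}(4) applied in $G$ yields $yi_p\in\Real(G)$ with $\{2,p\}\subseteq\pi(|(yi_p)^G|)$, contradicting the disconnection of $\Delta^*(G)$.

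For (2), I first check $G/N$ is not $2$-closed: if $PN/N\unlhd G/N$ for a Sylow $2$-subgroup $P$ of $G$, then $PN\unlhd G$ has odd index dividing $|G:P|$, so $PN=G$ by $G=\OO^{2'}(G)$; since $P\cap N=1$ this gives $G=N\rtimes P$, so $N$ is a normal $2$-complement and $G$ is $2$-nilpotent, a contradiction. Hence by Lemma~\ref{lem:even order real}, $G/N$ has a nontrivial real element of odd order, which Lemma~\ref{lem: real elements}(6) lifts to $y\in G$ real of odd order with $y\notin N$; thus $|(yN)^{G/N}|$ is even and $2\in\rho^*(G/N)$. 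Assuming $\Delta^*(G/N)$ connected, Lemma~\ref{lem:subgraph} embeds it as a subgraph of $\Delta^*(G)$, so $\rho^*(G/N)\subseteq\pi_1$. For each $p\in\pi_2$, with $i_p$ as in Lemma~\ref{lem:2-closed}, $i_p\notin N$ and $|(i_pN)^{G/N}|\mid|i_p^G|$ is a $\pi_2$-number; hence $|(i_pN)^{G/N}|=1$, i.e.\ $[G,i_p]\leq N$.

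The remaining step, which I expect to be the main obstacle, is to promote this quotient-level commutation to a genuine $G$-level commutation: to produce a real element $z\in G$ of odd order with $zN\neq N$ that commutes with $i_p$ in $G$. Once $z$ is obtained, Lemma~\ref{lem: real elements}(4) applied to $z,i_p$ in $G$ yields the edge between $2$ and $p$ in $\Delta^*(G)$ as in (1), the desired contradiction. I would construct $z$ by inducting on $|N|$, reducing to the case that $N$ is a minimal normal subgroup of $G$; by Feit--Thompson, $N$ is then elementary abelian of some odd prime exponent $r$. Coprime action of $\langle i_p\rangle$ on $N$ gives $N=\Centralizer_N(i_p)\oplus[N,i_p]$. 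If $i_p$ centralizes $N$, then $[y,i_p]\in N$ is both fixed and inverted by $i_p$, forcing $[y,i_p]^2=1$ and hence $[y,i_p]=1$ in the odd-order group $N$; so $y$ itself already commutes with $i_p$ in $G$. Otherwise $[N,i_p]\neq 0$; the $N$-orbit of $i_p$ under conjugation has size $|[N,i_p]|$ and sits inside $i_p^G$, so $r$ divides the $\pi_2$-number $|i_p^G|$ and hence $r\in\pi_2$, and one constructs $z$ inside the coset $yN$ by combining the coprime-action splitting of $N$ under $\langle i_p\rangle$ with the reality-lifting Lemma~\ref{lem: real elements}(6).
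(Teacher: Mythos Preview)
Your argument for (1) is correct and essentially equivalent to the paper's, though framed by contradiction. The paper argues more directly: since $|G:M|$ is odd the Sylow $2$-subgroup $S$ of $\Centralizer_G(i)$ lies in $M$, and if $|i^M|=1$ then $M\le\Centralizer_G(i)$ forces $S\unlhd M$, so $M$ is $2$-closed, contrary to hypothesis; hence $|i^M|>1$ and $\rho^*(M)\cap\pi_2\neq\emptyset$ immediately.

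For (2), your setup (that $G/N$ is not $2$-closed, that $2\in\rho^*(G/N)$, and that the assumption $\rho^*(G/N)\subseteq\pi_1$ forces $Ni_p\in\Center(G/N)$) is correct and matches the paper. The gap is exactly where you flag it. Your proposed construction of a real $z\in yN$ commuting with $i_p$ does not go through as written in the case $[N,i_p]\neq1$: adjusting $y$ by an element of $N$ so as to land in $\Centralizer_G(i_p)$ will in general destroy reality in $G$, while Lemma~\ref{lem: real elements}(6) only guarantees that \emph{some} element of the coset $yN$ is real in $G$, not one lying in $\Centralizer_G(i_p)$. Invoking the coprime splitting of $N$ together with Lemma~\ref{lem: real elements}(6) does not by itself reconcile these two constraints.

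The paper avoids this obstacle entirely. From $Ni_p\in\Center(G/N)$ and $|N|$ odd it invokes the coprime centralizer formula $\Centralizer_{G/N}(Ni_p)=\Centralizer_G(i_p)N/N$ (\cite[Lemma~7.7]{Isaacs-1}), so $G/N=\Centralizer_G(i_p)N/N$ is a quotient of $\Centralizer_G(i_p)$; since the latter is $2$-closed by Lemma~\ref{lem:2-closed}, so is $G/N$, contradicting what you already established. If you wish to keep your template, the same identity $G=\Centralizer_G(i_p)N$ is what makes it work: apply Lemma~\ref{lem: real elements}(6) \emph{inside} $\Centralizer_G(i_p)$, to the normal subgroup $\Centralizer_G(i_p)\cap N$ of odd order (whose quotient is isomorphic to $G/N$), to obtain a real $z\in\Centralizer_G(i_p)$ of odd order with $zN=yN$. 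But this is strictly more work than the paper's one-line route.
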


\begin{proof} By Lemma \ref{lem:2-closed}, $G$ has an involution $i$ such that $|i^G|>1$ is a $\pi_2$-number and $\Centralizer_G(i)$ has a normal Sylow $2$-subgroup $S$. 

For (1), let $M\unlhd G$ with $|G:M|$ being odd. Notice that $\rho^*(M)\subseteq\rho^*(G)=\pi_1\cup\pi_2$ and that if $M$ is $2$-closed, then $G$ is also $2$-closed. Thus we assume that $M$ is not $2$-closed. By Lemma \ref{lem:odd size}, $2$ divides some real class size of $M$ and hence $2\in\rho^*(M)\cap\pi_1$.  Now $M$ contains every real element of $G$ by Lemma \ref{lem: real elements}(5), in particular, $i\in M$. Moreover, $\Delta^*(M)$ is a subgraph of $\Delta^*(G)$. If $M\leq \Centralizer_G(i)$, then $S\unlhd M$ as $|G:M|$ is odd, so $M$ is $2$-closed, a contradiction. Thus,  $|i^M|>1$ and hence $\rho^*(M)\cap\pi_2\neq\emptyset$.  Therefore, $\Delta^*(M)$ is disconnected as $\rho^*(M)\cap \pi_i$ is non-empty for each $i=1,2$.

 For (2), suppose that $N\unlhd G$ with $|N|$ odd. By Lemma \ref{lem:subgraph}, $\Delta^*(G/N)$ is a subgraph of $\Delta^*(G)$. In particular, $\rho^*(G/N)\subseteq \rho^*(G)=\pi_1\cup \pi_2$. If $G/N$ is $2$-closed, then $SN\unlhd G$ is of odd index and thus $G=SN$ since  $G=\OO^{2'}(G)$. However, this would imply that $G$ is $2$-nilpotent with a normal $2$-complement $N$. Therefore, we assume that $G/N$ is not $2$-closed.

 Clearly, $Ni\in G/N$ is an involution. If $Ni$ is central in $G/N$, then $G/N=\Centralizer_{G/N}(Ni)=\Centralizer_G(i)N/N,$ where the latter equality follows from \cite[Lemma 7.7]{Isaacs-1}. Hence $G/N$ has a normal Sylow $2$-subgroup $SN/N$, a contradiction. Thus $Ni$ is not central in $G/N$ and  $|(Ni)^{G/N}|>1$, so $\rho^*(G/N)\cap\pi_2\neq\emptyset$. Finally, as $G/N$ is not $2$-closed,  $2\in \rho^*(G/N)\cap\pi_1$ by applying Lemma \ref{lem:odd size} again. Therefore, $\Delta^*(G/N)$ is disconnected.
\end{proof}

We are now ready to prove Theorem A which we restate here.
\begin{thm}\label{th:disconnected real class sizes graph}
Let $G$ be a finite group. If $\Delta^*(G)$ is disconnected, then $G$ is solvable.\end{thm}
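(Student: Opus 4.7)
The plan is to induct on $|G|$, taking $G$ to be a minimal non-solvable group with $\Delta^*(G)$ disconnected, and writing $\rho^*(G)=\pi_1\cup\pi_2$ for the partition of its vertex set into connected components, with $2\in\pi_1$.

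I would first carry out the structural reductions encoded in Proposition~\ref{prop:normal and quotient}. If $G$ were $2$-closed, then it would be an extension of a $2$-group by an odd-order group and hence solvable by the Odd Order Theorem, contradicting the choice of $G$. So $G$ is not $2$-closed, and applying Proposition~\ref{prop:normal and quotient}(1) to $M:=\OO^{2'}(G)$ forces $G=\OO^{2'}(G)$: otherwise $\Delta^*(M)$ is disconnected, minimality makes $M$ solvable, and as $|G:M|$ is odd the whole of $G$ would be solvable. Similarly $G$ cannot be $2$-nilpotent (such a group is solvable), and Proposition~\ref{prop:normal and quotient}(2) combined with induction then forces $\OO_{2'}(G)=1$.

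Next I would invoke Lemma~\ref{lem:2-closed} to obtain an involution $i\in G$ with $|i^G|$ an odd $\pi_2$-number bigger than $1$, whose centralizer $\Centralizer_G(i)$ has a normal Sylow $2$-subgroup $S$. Since $|i^G|$ is odd, $S$ is in fact a Sylow $2$-subgroup of $G$ itself, and $i\in\Center(S)$. Because $\OO_{2'}(G)=1$ we have $\Center^*(G)=\Center(G)$, and as $i\notin\Center(G)$, Glauberman's $\Center^*$-theorem yields a $G$-conjugate $j\ne i$ of $i$ lying in $S$; automatically $[i,j]=1$, and $\Centralizer_G(j)$ is also $2$-closed by the same argument.

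The hard part will be to convert this configuration into an actual contradiction. The combinatorial lever is Lemma~\ref{lem: real elements}(4): if $x,y\in\Real(G)$ commute, have coprime orders, and have coprime class sizes, then $\pi(|x^G|)\cup\pi(|y^G|)\subseteq\pi(|(xy)^G|)$, so were $|x^G|$ a nontrivial $\pi_1$-number and $|y^G|$ a nontrivial $\pi_2$-number we would join the two components, contradicting disconnectedness. My plan is therefore to produce a real element $x$ of odd order commuting with a real element whose class size is a nontrivial $\pi_2$-number. The proof of Lemma~\ref{lem:2-closed} already shows $\Centralizer_G(i)$ contains no nontrivial real element of odd order, so the sought element must live outside $\Centralizer_G(i)$, and further structural information is required. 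Concretely, I would analyze a minimal normal subgroup $N$ of $G$: since $\OO_{2'}(G)=1$, either $N\le\OO_2(G)$ is an elementary abelian $2$-group, in which case Lemma~\ref{lem: real elements}(6) permits one to push the disconnectedness down to $G/N$ and contradict minimality; or $N=T^k$ is a direct product of copies of a nonabelian finite simple $T$. The second case is where I expect the principal difficulty: one must combine the existence result of Lemma~\ref{lem:real element of order p} with the CFSG-based structure theorems from~\cite{GNT, DMN, Tiep, IN} used through Lemma~\ref{lem: Ito-Michler theorem for conjugacy classes} to rule out each candidate $T$ by producing, inside $G$, a real element whose class size mixes primes from both components.
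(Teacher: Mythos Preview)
Your reductions to $G=\OO^{2'}(G)$ and $\OO_{2'}(G)=1$ match the paper, but from that point on your plan diverges and develops genuine gaps.

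\textbf{The case $N\le\OO_2(G)$ does not go through as stated.} You propose to ``push the disconnectedness down to $G/N$'' via Lemma~\ref{lem: real elements}(6), but that lemma only lifts a real coset $Nx$ to a real element of $G$ when $|N|$ is odd \emph{or} the order of $Nx$ is odd. Here $|N|$ is a $2$-power, so involutions of $G/N$ need not lift, and in fact the paper explicitly warns (just before Lemma~\ref{lem:subgraph}) that $\Delta^*(G/N)$ is in general \emph{not} a subgraph of $\Delta^*(G)$ when $|N|$ is even: new vertices and edges may appear from involutions of the quotient. Proposition~\ref{prop:normal and quotient}(2), which is the only tool available for transferring disconnectedness to a quotient, is stated only for $|N|$ odd. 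So this branch of your dichotomy is not handled.

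\textbf{The nonabelian branch is only a wish, not a plan.} Saying that one must ``combine'' the cited CFSG results to ``rule out each candidate $T$'' is not a strategy; nothing in those references gives you a ready-made contradiction for an arbitrary nonabelian minimal normal subgroup of a group with disconnected $\Delta^*$. Your $Z^*$-theorem observation about the conjugate $j$ is correct but is never connected to either branch.

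\textbf{What the paper actually does.} The paper abandons minimal-normal-subgroup analysis entirely. It first uses Fisman--Arad \cite[Theorem~2]{FA} (coprime class sizes force non-simplicity) to show $G$ is not simple. Then, for any maximal normal $M\lhd G$, it proves $|G:M|=2$ and $i\notin M$: the nonabelian-simple-quotient case is eliminated by combining \cite[Theorem~2]{FA} with the classification \cite[Theorem~4.1]{GNT} of simple groups in which some odd prime misses all real class sizes of odd-order elements. Hence $G=G'\langle i\rangle$ with $|G:G'|=2$. By minimality $\Delta^*(G')$ is connected, and since $2\in\rho^*(G')$ one gets $\rho^*(G')\subseteq\pi_1$. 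Lemma~\ref{lem: Ito-Michler theorem for conjugacy classes} (plus a short $\SL_3(2)$ argument) then forces $K:=\OO^{2'}(G')$ to be a $\pi_1$-group. Finally, $|i^G|=|G':\Centralizer_{G'}(i)|$ is a $\pi_2$-number, so a Hall $\pi_1$-subgroup of the solvable group $\Centralizer_{G'}(i)$ is already a Hall $\pi_1$-subgroup of $G'$ and therefore contains $K$; thus $K$ is solvable and so is $G$. The key missing idea in your approach is this passage through $G'$ and the realisation that $\Centralizer_{G'}(i)$ controls a full Hall $\pi_1$-subgroup of $G'$.
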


\begin{proof}

Let $G$ be a counterexample with minimal order. Then  $G$ is non-solvable and $\Delta^*(G)$ is disconnected. By Lemma \ref{lem:real  prime graph components}, $\Delta^*(G)$ has exactly two connected components with vertex sets $\pi_1$ and $\pi_2$, respectively. If $G$ has a normal Sylow $2$-subgroup, then it is clearly solvable by Feit-Thompson theorem. Therefore, we can assume that $G$ has no normal Sylow $2$-subgroup. Now it follows from Lemma \ref{lem:even order real}  that $G$ has a nontrivial real element $x$ of odd order. Then $|x^G|$ is divisible by $2$ and hence $2$ is always a vertex of $\Delta^*(G)$.  We assume that $2\in \pi_1$.  Hence all vertices in $\pi_2$ are odd primes.

\medskip
(1) By Lemma \ref{lem:2-closed}, $G$ has an involution $i$ such that $|i^G|>1$ is a $\pi_2$-number and $\Centralizer_G(i)$ has a normal Sylow $2$-subgroup, say $S$. Clearly, $S$ is also a Sylow $2$-subgroup of $G$ as $|i^G|$ is odd. Notice that $\Centralizer_G(i)$ is solvable. Now $G$ has a nontrivial real element $x$ of odd order by Lemma \ref{lem:even order real}. Clearly $|x^G|$ is even so  $|x^G|$ must be a $\pi_1$-number. Thus $(|x^G|,|i^G|)=1$; therefore, $G$ is not a nonabelian simple group by  \cite[Theorem 2]{FA}.

\medskip
 (2) $G=\OO^{2'}(G)$.
   Since $G$ is not $2$-closed, $\Delta^*(\OO^{2'}(G))$ is disconnected by Proposition \ref{prop:normal and quotient}(1). If $\OO^{2'}(G)<G$, then $\OO^{2'}(G)$ is solvable by the minimality of $|G|$, hence $G$ is solvable since $G/\OO^{2'}(G)$ is solvable. This contradiction shows that $G=\OO^{2'}(G)$.

 \medskip
 (3) $\OO_{2'}(G)=1$.  By (2) above, we have $G=\OO^{2'}(G)$ and since $G$ is not solvable, $G$ is not $2$-nilpotent so that by Proposition \ref{prop:normal and quotient}(2) $\Delta^*(\overline{G})$ is disconnected, where $\overline{G}=G/\OO_{2'}(G)$. If $\OO_{2'}(G)$ is nontrivial, then $|\overline{G}|<|G|$ and thus by the minimality of $|G|$, $\overline{G}$ is solvable and so is $G$. Hence $\OO_{2'}(G)=1$ as required.

\smallskip
(4)  If $M$ is a maximal normal subgroup of $G$, then  $|G:M|=2$ and $G=M\langle i\rangle$. Let $M$ be a maximal normal subgroup of $G$. Then $G/M$ is a simple group.

(a) Assume first that $G/M$ is abelian, then $G/M\cong C_r$ for some prime $r$. Since $\OO^{2'}(G)=G$ by (2), we deduce that $r=2$. Clearly, $M$ is not solvable. We next claim that $i\not\in M$ and hence we have that $G=M\langle i\rangle$.

Suppose that by contradiction that $i\in M$. Then either $|i^M|>1$ or $M=\Centralizer_G(i)$. If the latter case holds, then $M$ is solvable by (1) and thus $G$ is solvable, a contradiction. Assume that $|i^M|>1$. Notice that $\Delta^*(M)$ is a subgraph of $\Delta^*(G)$. We see that $\rho^*(M)\cap \pi_2\neq\emptyset$ as every prime divisor of $|i^M|$ is in $\pi_2$ and $i\in\Real(M)$. Observe next that $M$ is not $2$-closed so it has a nontrivial real element $z$ of odd order by Lemma \ref{lem:even order real} and thus $|z^M|$ is even. In other words, $2\in\rho^*(M)$. Therefore $\rho^*(M)\cap\pi_1\neq\emptyset$. Hence we have shown that $\Delta^*(M)$ is disconnected. Thus by induction, $M$ is solvable, which is a contradiction.

\smallskip
(b) Now,  assume that $G/M$ is a non-abelian simple group.  Set $\overline{G}=G/M$.

Assume first that $i\not\in M$. Then $\overline{i}$ is an involution in $\overline{G}$ and $|\overline{i}^{\overline{G}}|$ divides $|i^G|$. Let $\overline{y}$ be a nontrivial real element of $\overline{G}$ of odd order (such an element exists by Lemma \ref{lem:even order real}). By Lemma \ref{lem: real elements}(6), $\overline{y}$ lifts to a real element $z\in G$ of odd order. Therefore $|\overline{y}^{\overline{G}}|$ divides $|z^G|$. Since $(|z^G|,|i^G|)=1$, we deduce that $(|\overline{y}^{\overline{G}}|,|\overline{i}^{\overline{G}}|)=1$, contradicting \cite[Theorem 2]{FA}.

Assume that $i\in M$. If $G=M\Centralizer_G(i)$, then $G/M\cong \Centralizer_G(i)/(M\cap\Centralizer_G(i))$ is a non-abelian simple group, which is impossible as $\Centralizer_G(i)$ is solvable by (1). Thus $H:=M\Centralizer_G(i)<G$ and $|G:H|=|\overline{G}:\overline{H}|$ divides $|G:\Centralizer_G(i)|=|i^G|$. Let $\overline{y}\in \overline{G}$ be a real element of odd order and $z\in G$ be a real element of odd order such that $\overline{y}=\overline{z}$. Then $|\overline{y}^{\overline{G}}|=|\overline{z}^{\overline{G}}|$ divides $|z^G| $ so $(|\overline{G}:\Centralizer_{\overline{G}}(\overline{z})|,|\overline{G}:\overline{H}|)=1$ as $(|z^G|,|i^G|)=1$. Therefore, $\overline{G}=\overline{H}\Centralizer_{\overline{G}}(\overline{z})$, where $\overline{H}$ has odd index in $\overline{G}$ and $\overline{H}$ has a normal Sylow $2$-subgroup $\overline{S}$.

 As every nontrivial real element of odd order of $\overline{G}$ lifts to a nontrivial real element of odd order of $G$ by Lemma \ref{lem: real elements}(6), every prime divisor $s$ of $|\overline{G}:\overline{H}|$ (which lies in $\pi_2$) divides the size of no nontrivial real element of odd order of $\overline{G}$, so $\overline{G}\cong \PSL_2(q)$ with $q=2^r-1$ a Mersenne prime and $s\mid (q-1)/2$ or $\overline{G}\cong\textrm{M}_{23}$ with $s=5$ by \cite[Theorem 4.1]{GNT}. If the latter case holds, then $|\overline{G}:\overline{H}|$ must be a power of $5$. Inspecting \cite{Atlas} shows that this is not the case. Thus $\overline{G}\cong \PSL_2(q)$ with $q=2^r-1$ a Mersenne prime. It follows that $r\ge 3$ and $q\ge 7$ as $\overline{G}$ is non-solvable. Observe that $\overline{S}$ is a Sylow $2$-subgroup of $\overline{G}$ of order $q+1=2^r\ge 8$ and so $\overline{S}\cong \textrm{D}_{q+1}$, which is a maximal subgroup of $\overline{G}$ unless $q=7$. Suppose first that $q>7$. It implies that $\overline{H}=\overline{S}$ so that $|\overline{G}:\overline{H}|=q(q-1)/2$.  In particular, $q$ divides $|\overline{G}:\overline{H}|$ and thus $q\mid (q-1)/2$ by the claim above, which is impossible. 
 Now assume that $q=7$. Notice that the Sylow $2$-subgroup of $\overline{G}\cong\PSL_2(7)$ is self-normalizing but not maximal. Since $\overline{S}\unlhd \overline{H}$, we must have that $\overline{H}=\overline{S}$ and  we will get a contradiction as above.  
 
 \smallskip
(5) $G=G'\langle i\rangle$ and $|G:G'|=2$.

Since $G$ is not non-abelian simple, $G$ possesses a maximal normal subgroup $W$. It follows from (4) that $G=W\langle i\rangle$ and $|G:W|=2$. It also follows from (4) that $i$ does not lie in any maximal normal subgroup of $G$ so that $G=\langle i^G\rangle.$

Clearly $G'\leq W$ as $G/W\cong C_2$ is abelian. Moreover, $G=\langle i^G\rangle \leq G'\langle i\rangle\leq W\langle i\rangle=G.$
It follows that $W=G'$ and the claim follows.

\medskip   
(6)  $\OO^{2'}(G')$ is a $\pi_1$-group.

Since $|G:G'|=2$ and $G$ is non-solvable, $G'$ is non-solvable and thus $\Delta^*(G')$ is a connected subgraph of $\Delta^*(G)$ with  $2\in\rho^*(G')\subseteq \pi_1$. Observe that $\pi(G)=\pi(G')$. Let $\sigma=\pi(G')\setminus\pi_1$. Now,  if $q\in \sigma$, then $q$ is odd and divides the size of no nontrivial real conjugacy classes of $G'$. 
Let $K=\OO^{2'}(G')$. 

If $q\in \sigma$ and $q>3$ or $q=3$ and $\SL_3(2)$ is not a composition factor of $G'$, then $K$ has a normal Sylow $q$-subgroup $Q$ by Lemma \ref{lem: Ito-Michler theorem for conjugacy classes}.  Clearly $Q\unlhd G$ and thus $Q=1$ by (3). Hence $q$ does not divide $ |K|$. 

We now suppose that $q=3$ and $\SL_3(2)$ is isomorphic to a composition factor of $G'$. As $G'/K$ is solvable, $\SL_3(2)$ is isomorphic to a composition factor of $K$. Let $L$ be a subnormal subgroup of $K$ and $U\unlhd L$ such that $L/U\cong\SL_3(2)$. Using \cite{Atlas}, we see that $L/U$ has a self-normalizing Sylow $2$-subgroup $T/U$ and a real element $Uz\in L/U$  of order $3$ with $|(Uz)^{L/U}|=7\cdot 8$. There exists a real element $y\in L$ of $3$-power order  with $Uz=Uy$ (see \cite[Lemma 2.6]{Tong}). Since $|(Uz)^{L/U}|$ divides $|y^L|$ and $|y^L|$ divides $|y^G|$, we see that  $7\in \pi_1$.

By (1), $\Centralizer_G(i)$ has a normal Sylow $2$-subgroup $S$ with $S\in\Syl_2(G)$.  As $L$ is subnormal in $G$, $S\cap L$ is a Sylow $2$-subgroup of $L$ and thus $(S\cap L)U/U$ is a Sylow $2$-subgroup of $L/U$. Observe that $\Centralizer_{G}(i)$
contains a Sylow $7$-subgroup, say $Q$, of $G$. Hence $(Q\cap L)U/U$ is a Sylow $7$-subgroup of $L/U$. Since $Q$ normalizes $S$, we can see that $(Q\cap L)U/U$ normalizes $(S\cap L)U/U$, which is impossible as the Sylow $2$-subgroup of $L/U$ is self-normalizing.
Thus we have shown that $K=\OO^{2'}(G')$ is a $\pi_1$-group.

\medskip
 \textbf{The final contradiction}.  By (5), we have $G=G'\langle i\rangle$ so $\Centralizer_G(i)=\Centralizer_{G'}(i)\langle i\rangle$ which implies that $|i^G|=|G:\Centralizer_G(i)|=|G':\Centralizer_{G'}(i)|$.
Since $|i^G|$ is a $\pi_2$-number and $\Centralizer_{G}(i)$ is solvable, $\Centralizer_{G'}(i)$ is also solvable and possesses a Hall $\pi_1$-subgroup $T$ which is also a Hall $\pi_1$-subgroup of $G'$ (as $|G':\Centralizer_{G'}(i)|$ is a $\pi_2$-number). As $\OO^{2'}(G')\unlhd G'$ is a $\pi_1$-subgroup by (6), we deduce that $\OO^{2'}(G')\leq T$ and thus $\OO^{2'}(G')$ is solvable since $T\leq \Centralizer_{G'}(i)$ is solvable. Clearly, $G'/\OO^{2'}(G')$ is solvable by Feit-Thompson theorem, which implies that $G'$ is solvable and hence $G$ is solvable. This contradiction finally proves the theorem.
\end{proof}

In the next result, we show that if $G=\OO^{2'}(G)$ and $\Delta^*(G)$ is disconnected, then each connected component of $\Delta^*(G)$ is complete and one of the components is just $\{2\}$. In particular, every real class size of $G$ is either odd or a $2$-power.

\begin{thm}\label{th: 2 powers or odd}
Let $G$ be a finite group. Suppose that $G=\OO^{2'}(G)$ and $\Delta^*(G)$ is disconnected with vertex sets $\pi_1$ and $\pi_2$ where $2\not\in\pi_2$. Then
 $\pi_1=\{2\}$ and $\pi_2=\pi(|i^G|)$ for some  non-central involution $i\in G$.
\end{thm}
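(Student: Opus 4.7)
The strategy is to take the involution $i$ supplied by Lemma \ref{lem:2-closed} and show that its class size already realizes all of $\pi_2$. By Theorem A, $G$ is solvable. Also $G$ cannot be $2$-closed: if it were, then $G/\OO_2(G)$ would have odd order and the hypothesis $G=\OO^{2'}(G)$ would force $G$ to be a $2$-group, contradicting that $\Delta^*(G)$ has two nonempty components. Lemma \ref{lem:even order real} therefore yields a nontrivial real element of odd order, whose class size is even by Lemma \ref{lem: real elements}(3), so $2\in\rho^*(G)$ and hence $2\in\pi_1$. Lemma \ref{lem:2-closed} then supplies a noncentral involution $i\in G$ with $|i^G|$ an odd $\pi_2$-number and $\Centralizer_G(i)$ having a normal Sylow $2$-subgroup.

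The key step is to prove that every odd prime divisor of $|G|$ divides $|i^G|$. Suppose for contradiction that some odd prime $p\mid|G|$ satisfies $p\nmid|i^G|$. Then $|\Centralizer_G(i)|_p=|G|_p$, so $\Centralizer_G(i)$ contains a Sylow $p$-subgroup $P$ of $G$. Since $G=\OO^{2'}(G)$ is $p$-solvable, Lemma \ref{lem:real element of order p} produces a real element $z\in G$ of order $p$; by Sylow's theorem we may conjugate $z$ so that it lies in $P\leq\Centralizer_G(i)$, hence $iz=zi$, while $z$ remains real and of order $p$. Because $z$ is real of odd order $>1$, Lemma \ref{lem: real elements}(3) gives $2\mid|z^G|$, and the disconnectedness of $\Delta^*(G)$ then forces every prime divisor of $|z^G|$ to lie in $\pi_1$; in particular $(|z^G|,|i^G|)=1$. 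Now $i$ and $z$ are commuting real elements with coprime orders and coprime class sizes, so Lemma \ref{lem: real elements}(4) yields that $iz$ is real and
\[
\pi(|(iz)^G|)\supseteq\pi(|i^G|)\cup\pi(|z^G|).
\]
This set contains $2\in\pi_1$ together with some prime of $\pi_2$, producing an edge of $\Delta^*(G)$ joining its two components---a contradiction.

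The two conclusions follow immediately. From the claim, $\pi(G)\setminus\{2\}\subseteq\pi(|i^G|)\subseteq\pi_2\subseteq\pi(G)\setminus\{2\}$, so $\pi_2=\pi(|i^G|)=\pi(G)\setminus\{2\}$. Then $\pi_1\subseteq\rho^*(G)\setminus\pi_2\subseteq\{2\}$, and combined with $2\in\pi_1$ this gives $\pi_1=\{2\}$. The principal obstacle is the Sylow transplant in the middle paragraph: one has to produce a real element of order $p$ and move it into $\Centralizer_G(i)$ without losing realness, which rests crucially on $G=\OO^{2'}(G)$ (to invoke Lemma \ref{lem:real element of order p}) together with the $\pi_2$-structure of $|i^G|$ (to house a full Sylow $p$-subgroup of $G$ inside $\Centralizer_G(i)$).
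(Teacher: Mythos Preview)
Your proof is correct and follows essentially the same route as the paper's own argument: invoke Theorem~A for solvability, take the involution $i$ from Lemma~\ref{lem:2-closed}, and show by contradiction (via Lemma~\ref{lem:real element of order p} and Lemma~\ref{lem: real elements}(4)) that every odd prime of $|G|$ must divide $|i^G|$. The paper packages the same contradiction as ``$\sigma=\emptyset$'' where $\sigma$ is the set of odd primes dividing $|\Centralizer_G(i)|$ but not $|i^G|$, and your exposition of why $G$ is not $2$-closed (hence $2\in\pi_1$) is if anything more explicit than the paper's.
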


\begin{proof} By Theorem \ref{th:disconnected real class sizes graph}, we know that $G$ is solvable. Let $i\in G$ be an involution as in Lemma \ref{lem:2-closed} and let $S$ be a normal Sylow $2$-subgroup of $\Centralizer_G(i)$.
Let $\sigma$ be the set of odd prime divisors $p$ of $|\Centralizer_G(i)|$ such that $p$ does not divide $ |i^G|$. Since $|i^G|$ is a $\pi_2$-number and $|G|=|i^G|\cdot |\Centralizer_G(i)|$, we see that $$\pi_1\setminus\{2\}\subseteq \pi(G)\setminus (\pi_2\cup\{2\})\subseteq \sigma=\pi(G)\setminus (\{2\}\cup \pi(|i^G|)).$$

Assume that $\sigma=\emptyset$. Then  $\pi_1\subseteq \{2\}$ and $\pi_2\subseteq\pi(|i^G|)$. As $G=\OO^{2'}(G)$ and $\Delta^*(G)$ is disconnected,  $2$ divides some real class size of $G$, hence $2\in\pi_1$. Moreover $\pi(|i^G|)\subseteq \pi_2$; therefore $\pi_1=\{2\}$ and $\pi(|i^G|)=\pi_2$ as wanted.

Assume next that $\sigma$ is not empty and let $p\in\sigma$. Then $p$ is odd and  $p$ divides $|\Centralizer_G(i)|$ but does not divide $ |i^G|$. By Lemma \ref{lem:real element of order p}, $G$ has a real element $x$ of order $p$. Let $P$ be a Sylow $p$-subgroup of $\Centralizer_G(i)$. Since $|G:\Centralizer_G(i)|=|i^G|$ is not divisible by $p$, $P$ is also a Sylow $p$-subgroup of $G$. Replacing $x$ by its $G$-conjugates, we can assume $x\in P\leq \Centralizer_G(i)$ by Sylow theorem. We have that $xi=ix$, $(o(x),o(i))=1$ and $(|x^G|,|i^G|)=1$ so that by applying Lemma \ref{lem: real elements}(4), $xi$ is real in $G$ and $$\pi(|x^G|)\cup\pi(|i^G|)\subseteq \pi(|(xi)^G|).$$
As $|i^G|>1$, we can find a prime $r$ dividing $|i^G|$ and so $r\in\pi_2$. Now the previous inclusion would imply that $2\in\pi(|x^G|)\subseteq \pi_1$ and $r\in\pi_2$ are adjacent in $\Delta^*(G)$, which is impossible.
\end{proof}

We now consider the situation when $2$ is not a vertex of $\Delta^*(G)$, where $G$ is a finite group. By Lemma \ref{lem:odd size}, $G$ has a normal Sylow $2$-subgroup $S$ with $\Real(S)\subseteq\Center(S)$.

\begin{lem}\label{lem:2 is a vertex} Let $G$ be a finite group. Suppose that $G$ has a normal Sylow $2$-subgroup $S$ with $\Real(S)\subseteq \Center(S)$.  Then $\Delta^*(G)$ is connected.
\end{lem}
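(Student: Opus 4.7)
The plan is to argue by contradiction. Assume $\Delta^*(G)$ is disconnected with nonempty components $\pi_1,\pi_2$. By Theorem \ref{th:disconnected real class sizes graph}, $G$ is solvable. By Lemma \ref{lem:odd size} the hypothesis forces every real class size of $G$ to be odd, so by Lemma \ref{lem: real elements}(3) every nonidentity real element of $G$ is an involution. Conversely, every involution of $G$ is real, is a $2$-element, and therefore lies in $\Real(S)\subseteq \Center(S)$. Writing $A$ for the subgroup $\{a\in \Center(S):a^2=1\}$, an elementary abelian $2$-group, we conclude $\Real(G)=A$. By Schur--Zassenhaus, $G=S\rtimes H$ for a Hall $2'$-subgroup $H$. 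Since $A\subseteq \Center(S)$, for every $a\in A$ one has $\Centralizer_G(a)=S\cdot \Centralizer_H(a)$, so $|a^G|=[H:\Centralizer_H(a)]=|a^H|$. Thus the real class sizes of $G$ are precisely the $H$-orbit lengths on $A$.

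For $j\in\{1,2\}$, let $K_j$ be the characteristic subgroup of $H$ generated by its $\pi_j$-elements. Decomposing elements into commuting $\pi_j$- and $\pi_j'$-parts shows $H/K_j$ has no $\pi_j$-elements, so $[H:K_j]$ is a $\pi_j'$-number. The central observation I would establish is
\[
\Centralizer_A(K_j)=\{a\in A:\pi(|a^H|)\cap \pi_j=\emptyset\}.
\]
For $\subseteq$: if $K_j\le \Centralizer_H(a)$, then $|a^H|=[H:\Centralizer_H(a)]$ divides $[H:K_j]$, a $\pi_j'$-number. For $\supseteq$: if $H/\Centralizer_H(a)$ is a $\pi_j'$-group then every $\pi_j$-element of $H$ maps to the identity in the quotient, so lies in $\Centralizer_H(a)$; since $K_j$ is generated by such elements, $K_j\le \Centralizer_H(a)$.

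The disconnection hypothesis says that for every real $a\in G$, the set $\pi(|a^G|)$ is contained entirely in $\pi_1$ or in $\pi_2$; equivalently every $a\in A$ lies in $\Centralizer_A(K_1)\cup \Centralizer_A(K_2)$, so $A=\Centralizer_A(K_1)\cup \Centralizer_A(K_2)$. Since $A$ is an $\bbF_2$-vector space and the $\Centralizer_A(K_j)$ are subspaces, and a vector space over any field cannot be the union of two proper subspaces, one of the $\Centralizer_A(K_j)$ must equal $A$. Say $\Centralizer_A(K_1)=A$; then $K_1$ centralizes $A$, and for any $p\in \pi_1$ a Sylow $p$-subgroup $H_p\le K_1$ also centralizes $A$. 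Hence $p\nmid |a^G|$ for every real $a$, contradicting $p\in \pi_1\subseteq \rho^*(G)$.

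The only substantive step is the characterization of $\Centralizer_A(K_j)$, and this is a routine consequence of how $K_j$ is defined. There is no significant obstacle beyond setting up the framework: once the hypothesis is used to confine all real elements to the elementary abelian group $A$ on which $H$ acts coprimely, the conclusion follows from the elementary linear algebra fact that a vector space is never the union of two proper subspaces.
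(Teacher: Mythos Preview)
Your reduction to the action of the odd-order complement $H$ on $A=\Omega_1(\Center(S))$ is correct and matches the paper's setup. The gap is in the ``central observation''
\[
\Centralizer_A(K_j)=\{a\in A:\pi(|a^H|)\cap \pi_j=\emptyset\}.
\]
The $\subseteq$ inclusion is fine, but your proof of $\supseteq$ treats $H/\Centralizer_H(a)$ as a quotient group and concludes that every $\pi_j$-element of $H$ lies in $\Centralizer_H(a)$. This fails because $\Centralizer_H(a)$ need not be normal in $H$. Concretely, take $H=C_7\rtimes C_3$ (the nonabelian group of order $21$) acting faithfully and irreducibly on $A=\bbF_2^{\,3}$; for any nonzero $a\in A$ one has $|a^H|=7$, a $\{3\}'$-number, yet $K_3=\langle\text{$3$-elements of }H\rangle=H$ does not centralize $a$. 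Thus $a\notin\Centralizer_A(K_3)$ even though $\pi(|a^H|)\cap\{3\}=\emptyset$. Without the $\supseteq$ inclusion you cannot conclude $A=\Centralizer_A(K_1)\cup\Centralizer_A(K_2)$, and the rest of the argument collapses.

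What actually happens when $[H:\Centralizer_H(a)]$ is a $\pi_j'$-number is only that $\Centralizer_H(a)$ contains \emph{some} Hall $\pi_j$-subgroup of $H$ (by solvability of $H$), so $a$ lies in $\Centralizer_A(P)$ for one conjugate $P$ of a fixed Hall $\pi_j$-subgroup. This gives $A$ as a union of many subspaces, not two, and the ``no union of two proper subspaces'' trick no longer applies. The paper closes exactly this gap by invoking Theorem~1.1 of \cite{DGPS}: since $|A|$ and $|H|$ are coprime, $A$ is a completely reducible $\bbF_2H$-module, and for elements $x,y\in A$ with coprime orbit lengths one has $|(xy)^H|=|x^H|\cdot|y^H|$; applied to $x$ with $\pi(|x^G|)\subseteq\pi_1$ and $y$ with $\pi(|y^G|)\subseteq\pi_2$, this produces a real class size divisible by primes from both components. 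That multiplicativity of coprime orbit lengths in completely reducible coprime actions is a genuinely nontrivial input, and your elementary substitute does not replace it.

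A minor remark: invoking Theorem~\ref{th:disconnected real class sizes graph} is harmless but unnecessary, since a group with a normal Sylow $2$-subgroup is already solvable by Feit--Thompson.
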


\begin{proof} If $\Delta^*(G)$ has at most one vertex, then we are done. So, assume that $\Delta^*(G)$ has at least two vertices, i.e., $|\rho^*(G)|\ge 2$.

We argue by contradiction. Suppose that $\Delta^*(G)$ is not connected. Then $\Delta^*(G)$ has two connected components with vertex sets $\pi_1$ and $\pi_2$. Then we can find two  non-central real elements $x,y\in\Real(G)$ such that  $\pi(|x^G|)\subseteq \pi_1$ and $\pi(|y^G|)\subseteq\pi_2$ so $(|x^G|,|y^G|)=1$.

 Since $S\unlhd G$ and $\Real(S)\subseteq \Center(S)$, $\Real(G)\subseteq \Real(S)\subseteq \Center(S)$ by using Lemma \ref{lem: real elements}(5). Thus all nontrivial real elements of $G$ are involutions in $\Center(S)$.  
 Let $E$ be the set of all involutions of  $\Center(S)$ together with the identity. Then $E$ is an elementary abelian subgroup of $\Center(S)$. Indeed, $E=\Omega_1(\Center(S))$ and thus $E\unlhd G.$ 
 
 Clearly $x,y\in E$ so that $E$ is not central. In particular, $\Centralizer_G(E)\unlhd G$ is a proper subgroup of $G$. Notice that $E\leq \Center(S)$ and hence $S\leq \Centralizer_G(E)$. Therefore $A:=G/\Centralizer_G(E)$ is a nontrivial group of odd order and we can consider $E$ as an $\mathbb{F}_2A$-module.
 By Maschke's theorem, $E$ is a completely reducible $A$-module. Observe that $|x^A|=|A:\Centralizer_A(x)|=|G:\Centralizer_G(x)|=|x^G|$ and $|y^A|=|y^G|$. Therefore, the two $A$-orbits $x^A$ and $y^A$ have coprime sizes. By Theorem 1.1 in \cite{DGPS}, the $A$-orbit $(xy)^A$ has size $|x^A|\cdot |y^A|$. Hence $|(xy)^G|=|x^G|\cdot |y^G|$, where $xy\in E$ is an involution. This implies that there is an edge between a prime in $\pi_1$ and a prime in $\pi_2$, which is impossible.
\end{proof}

We are now ready to prove Theorem B.

\begin{thm}\label{th:structure}
Let $G$ be a finite group. Suppose that $\Delta^*(G)$ is disconnected. Then $2$ divides some real class size and one of the following holds.

\begin{enumerate}
\item[$(1)$] $G$ has a normal Sylow $2$-subgroup. 

\item[$(2)$] $\Delta^*(\OO^{2'}(G))$ is disconnected and the real class sizes of $\OO^{2'}(G)$ are either odd or powers of $2$.
\end{enumerate}
\end{thm}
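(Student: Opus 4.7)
My overall strategy is to assemble the preceding results—Lemma \ref{lem:odd size}, Lemma \ref{lem:2 is a vertex}, Proposition \ref{prop:normal and quotient}(1), and especially Theorem \ref{th: 2 powers or odd}—without reopening the solvability arguments of Theorem \ref{th:disconnected real class sizes graph}. There are essentially two things to establish: that $2 \in \rho^*(G)$, and that failure of conclusion (1) forces conclusion (2).

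For the first point, I would argue by contradiction. If no real class size of $G$ were even, then by Lemma \ref{lem:odd size} the Sylow $2$-subgroup $S$ would be normal in $G$ and satisfy $\Real(S) \subseteq \Center(S)$. But Lemma \ref{lem:2 is a vertex} immediately tells us that this configuration forces $\Delta^*(G)$ to be connected, contradicting the hypothesis. Hence $2 \in \rho^*(G)$, and in particular $2$ lies in one of the two connected components of $\Delta^*(G)$.

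For the dichotomy, assume conclusion (1) fails, i.e.\ $G$ is not $2$-closed, and set $H := \OO^{2'}(G)$. Since $G/H$ has odd order, Proposition \ref{prop:normal and quotient}(1) applies and $\Delta^*(H)$ is disconnected. I would next invoke the standard idempotence of the operator $\OO^{2'}$ (i.e.\ $\OO^{2'}(H) = H$, because $G/\OO^{2'}(H)$ would otherwise be an extension of $2'$-groups, hence a $2'$-group, contradicting the minimality in the definition of $\OO^{2'}(G)$). Thus $H$ satisfies the hypotheses of Theorem \ref{th: 2 powers or odd}.

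Applying Theorem \ref{th: 2 powers or odd} to $H$, the two components of $\Delta^*(H)$ have vertex sets $\{2\}$ and $\pi(|i^H|)$ for some non-central involution $i \in H$. Now, for any non-central real element $x \in H$, the primes dividing $|x^H|$ are pairwise adjacent in $\Delta^*(H)$, so they must lie in a single connected component. Either $\pi(|x^H|) \subseteq \{2\}$, in which case $|x^H|$ is a power of $2$, or $\pi(|x^H|) \subseteq \pi(|i^H|) \subseteq \{\text{odd primes}\}$, in which case $|x^H|$ is odd. This gives conclusion (2).

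\textbf{Main obstacle.} There is no real obstacle here: the theorem is essentially an assembly of earlier results, with the only subtle step being the verification that $H = \OO^{2'}(H)$ so that Theorem \ref{th: 2 powers or odd} applies. One might worry about the circularity flagged in the remark after Theorem B—namely, that Theorem \ref{th: 2 powers or odd} relies on Theorem A for solvability—but this is fine: Theorem A is already proved independently, so we may freely use the stronger Theorem \ref{th: 2 powers or odd} in the current argument.
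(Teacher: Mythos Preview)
Your proposal is correct and follows essentially the same route as the paper: first rule out $2\notin\rho^*(G)$ via Lemma~\ref{lem:odd size} and Lemma~\ref{lem:2 is a vertex}, then in the non-$2$-closed case pass to $K=\OO^{2'}(G)$ via Proposition~\ref{prop:normal and quotient}(1) and invoke Theorem~\ref{th: 2 powers or odd}. You spell out a couple of steps the paper leaves implicit (the idempotence $\OO^{2'}(K)=K$ and the passage from $\pi_1=\{2\}$ to the ``odd or power of $2$'' dichotomy), but the structure is identical.
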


\begin{proof}
Suppose that $\Delta^*(G)$ is disconnected and let the vertex sets of the connected components are $\pi_1$ and $\pi_2$, respectively. Assume that $2\not\in\pi_2$. We see that $\Delta^*(G)$ has at least two vertices.

We first claim that $2\in\pi_1$. It suffices to show that $2$ divides some real class size of $G$. Suppose by contradiction that $2$ divides no real class size. Then $G$ has a normal Sylow $2$-subgroup $S$ with $\Real(S)\subseteq \Center(S)$ by Lemma \ref{lem:odd size}. However, Lemma \ref{lem:2 is a vertex} implies that $\Delta^*(G)$ is connected, which is a contradiction.

Next, suppose that $G$ has no normal Sylow $2$-subgroup. We claim that part (2) of the conclusion holds. Let $K:=\OO^{2'}(G)$. By Proposition \ref{prop:normal and quotient}(1), $\Delta^*(K)$ is disconnected. Since $\OO^{2'}(K)=K$ and $\Delta^*(K)$ is disconnected, the result follows from Theorem \ref{th: 2 powers or odd}.
\end{proof}

 We suspect that if a finite group $G$ has a normal Sylow $2$-subgroup $S$, then $\Delta^*(G)$ is connected, that is, case (1) in Theorem \ref{th:structure} cannot occur. However, we are unable to prove or disprove this yet. In view of Lemma \ref{lem:2 is a vertex}, this  is true if $\Real(S)\subseteq \Center(S)$.

There are many examples of finite groups whose prime graphs on real class sizes are disconnected. For the first example, let $m>1$ be an odd integer; the dihedral group $\textrm{D}_{2n}$ of order $2n$, where $n=m$ or $n=2m$, has a disconnected prime graph on real class sizes as its real class sizes are just $1,2$ and $m$.  For another example, let $G$ be a Frobenius group with Frobenius kernel $F$ and complement $H$, where both $F$ and $H$ are abelian and $|H|$ is even. In this case, all the nontrivial real class sizes of $G$ are $|F|$ and $|H|$ and since $(|F|,|H|)=1$, $\Delta^*(G)$ is disconnected.

\section{Proof of Theorem C}

Let $G$ be a finite group.  Observe that if $x\in \Center(G)$ is a real element of $G$, then $x^2=1$.  
We first begin with the following lemma.

\begin{lem}\label{lem:real 2-elements}
Let $G$ be a finite group and let $S\in\Syl_2(G)$. Suppose that $\Real(S)\subseteq\Center(S)$ and $|x^G|_2=2^a\ge 2$ for all non-central real elements $x\in G$. If $y$ is a nontrivial real element of $G$ whose order is a $2$-power, then $y$ is a central involution of $G$.
\end{lem}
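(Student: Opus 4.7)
The plan is to reduce to the case where $y$ lies in some Sylow $2$-subgroup of $G$ in which $y$ is actually real, then exploit the hypothesis $\Real(S)\subseteq\Center(S)$ to force $|y^G|$ to be odd. That will contradict the assumption that non-central real elements have class size with $2$-part $2^a\ge 2$, so $y$ must be central and the rest is immediate.

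First I would invoke Lemma \ref{lem: real elements}(2) to produce a $2$-element $t\in G$ with $y^t=y^{-1}$. Since $y$ is also a $2$-element, $\langle y,t\rangle$ is a $2$-subgroup of $G$ and is therefore contained in some Sylow $2$-subgroup $P$ of $G$. In particular, $y\in\Real(P)$. The hypothesis $\Real(S)\subseteq\Center(S)$ is invariant under conjugation of $S$, so it passes to every Sylow $2$-subgroup; thus $\Real(P)\subseteq\Center(P)$ as well, which gives $y\in\Center(P)$. Consequently $P\leq\Centralizer_G(y)$ and so $|y^G|=|G:\Centralizer_G(y)|$ is odd.

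If $y$ were non-central in $G$, the hypothesis would force $|y^G|_2=2^a\ge 2$, contradicting the fact that $|y^G|$ is odd. Hence $y\in\Center(G)$, and since $y$ is real, some $g\in G$ satisfies $y^g=y^{-1}$ while also $y^g=y$, forcing $y^2=1$. Therefore $y$ is a central involution of $G$.

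I do not anticipate any real obstacle: the argument is a few lines and uses only standard facts already recorded in the paper. The one point that needs to be flagged carefully is that $y$ need not a priori be real inside the distinguished Sylow $2$-subgroup $S$; the trick is that Lemma \ref{lem: real elements}(2) together with the fact that $y$ is a $2$-element produces a Sylow $2$-subgroup $P$ in which $y$ \emph{is} real, and the hypothesis transfers from $S$ to $P$ by Sylow conjugacy.
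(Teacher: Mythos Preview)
Your proof is correct and follows essentially the same route as the paper's: find a $2$-element $t$ inverting $y$, place $\langle y,t\rangle$ inside a Sylow $2$-subgroup, use $\Real(S)\subseteq\Center(S)$ to force $|y^G|$ odd, and then invoke the $2^a\ge 2$ hypothesis. The only cosmetic differences are that the paper conjugates $\langle y,t\rangle$ into the fixed $S$ rather than transferring the hypothesis to another Sylow $P$, and it records $y^2=1$ before centrality rather than after; one phrasing to tighten is that $\langle y,t\rangle$ is a $2$-group \emph{because $t$ normalizes $\langle y\rangle$} (via $y^t=y^{-1}$), not merely because both generators happen to be $2$-elements.
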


\begin{proof} Let $y$ be a nontrivial real element of $G$ whose order is a $2$-power. Then $y^t=y^{-1}$ for some $2$-element $t\in G$ by Lemma \ref{lem: real elements}(2). As $t$ normalizes $\langle y\rangle$, $U:=\langle y,t\rangle$ is a $2$-subgroup of $G$. By Sylow theorem, $U^g \leq S$ for some $g\in G.$ If $y^g$ is a central involution of $G$, then so is $y$. Thus we can assume that $U\leq S$. Since $y\in\Real(U)$, we have $y\in \Real(S)\subseteq \Center(S)$ and so $y^2=1$. Hence $y$ is an involution. 
Finally, since $y\in\Center(S)$, $|y^G|$ is odd which forces $y\in\Center(G)$ as by assumption $|x^G|$ is even for all non-central real elements  $x$ of $G$.
\end{proof}

The following lemma is obvious.
\begin{lem}\label{lem:reduction}
Let $G$ be a finite group and let $S\in\Syl_2(G)$. Suppose that $|x^G|_2=2^a\ge 2$ for all non-central real elements $x\in G$. Let $K\unlhd G$ be a normal subgroup of odd index. Then $|x^K|_2=2^a$ for all non-central real elements $x\in K$. 
\end{lem}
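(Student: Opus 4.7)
The plan is to reduce the statement about $K$ to the hypothesis about $G$ by showing that passing from the $G$-class to the $K$-class only removes an odd factor, so the $2$-part is preserved.

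First I would observe two quick containments. Since $K$ has odd index in $G$, Lemma \ref{lem: real elements}(5) gives $\Real(G) = \Real(K)$, so every real element of $K$ is real in $G$ and we may apply the hypothesis on $G$ to it. Also, if $x \in K$ lies in $\Center(G)$, then $x \in \Center(G) \cap K \subseteq \Center(K)$, so any $x \in K$ that is non-central in $K$ is automatically non-central in $G$. Together these let us use $|x^G|_2 = 2^a$ as our starting point for any non-central real $x \in K$.

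Next I would compare the $G$-class and $K$-class of $x$. Using the standard product formula,
\[
|x^G| \;=\; [G : \Centralizer_G(x)] \;=\; [G : K\Centralizer_G(x)] \cdot [K\Centralizer_G(x) : \Centralizer_G(x)].
\]
The second factor equals $[K : K \cap \Centralizer_G(x)] = [K : \Centralizer_K(x)] = |x^K|$, so
\[
|x^G| \;=\; [G : K\Centralizer_G(x)] \cdot |x^K|.
\]
The first factor divides $[G:K]$, which is odd by assumption, hence it is odd.

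Finally I would take $2$-parts in the displayed identity. Since the factor $[G : K\Centralizer_G(x)]$ is odd, $|x^G|_2 = |x^K|_2$, and the hypothesis on $G$ gives $|x^K|_2 = |x^G|_2 = 2^a$, as required. There is no real obstacle here; the lemma is a direct computation, which is why the author calls it obvious.
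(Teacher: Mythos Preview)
Your proof is correct. Both you and the paper establish the key identity $|x^G|_2 = |x^K|_2$, but the mechanics differ: you use the index chain $|x^G| = [G : K\Centralizer_G(x)]\cdot |x^K|$ and note that the first factor divides the odd number $[G:K]$; the paper instead picks a Sylow $2$-subgroup $P$ of $\Centralizer_G(x)$, observes that $P$ lies in $K$ (since a Sylow $2$-subgroup of $G$ already does), and concludes that $P$ is also Sylow $2$ in $\Centralizer_K(x)$, so $|\Centralizer_G(x)|_2 = |\Centralizer_K(x)|_2$. Your product-formula argument is arguably the cleaner of the two, and you are also more explicit than the paper in checking that ``non-central in $K$'' implies ``non-central in $G$'', which is needed to invoke the hypothesis.
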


\begin{proof}
Let $K$ be a normal subgroup of $G$ of odd index. By Lemma \ref{lem: real elements}(5), we have $\Real(G)\subseteq\Real(K)$. Now let $x\in K$ be a non-central real element of $K$ and let $C:=\Centralizer_G(x)$. Let $P\in\Syl_2(C)$ and let $S\in\Syl_2(G)$ such that $P\leq S$. Since $|G:K|$ is odd, we have $P\leq S\leq K$. In particular, $S\in\Syl_2(K)$. We see that $\Centralizer_K(x)=K\cap C$ and $P\leq K\cap C$. Thus $P\leq \Centralizer_K(x)\leq C$ and so $P$ is also a Sylow $2$-subgroup of $\Centralizer_K(x)$. Therefore $|C|_2=|\Centralizer_K(x)|_2$ and hence $|x^G|_2=|G:C|_2=|S:P|=|K:\Centralizer_K(x)|_2=|x^K|_2$.
\end{proof}

\begin{lem}\label{lem:central involutions}
Let $G$ be a finite group and let $S\in\Syl_2(G)$. Suppose that $|x^G|_2=2^a\ge 2$ for all non-central real elements $x\in G$. Assume further that $G$ has a  Sylow $2$-subgroup $S$ with $\Real(S)\subseteq \Center(S)$. Then every nontrivial real element of $G/\OO_{2'}(G)$ of $2$-power order lies in the center of $G/\OO_{2'}(G)$.
\end{lem}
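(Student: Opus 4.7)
The plan is to reduce the statement to Lemma \ref{lem:real 2-elements} via a lifting argument. Set $N = \OO_{2'}(G)$ and $\overline{G} = G/N$; by definition $|N|$ is odd, which is precisely what makes real elements of $\overline{G}$ liftable to $G$. Given a nontrivial real element $Nx \in \overline{G}$ of $2$-power order, my first step will be to produce a real element $y \in G$ with $Ny = Nx$, using Lemma \ref{lem: real elements}(6) (which applies because $|N|$ is odd).

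The next step is to upgrade $y$ to a real element of $G$ whose order itself is a $2$-power, since this is the hypothesis of Lemma \ref{lem:real 2-elements}. I plan to decompose $y = y_1 y_2$ with $y_1$ the $2$-part and $y_2$ the $2'$-part (commuting powers of $y$), note that both $y_1$ and $y_2$ are real by Lemma \ref{lem: real elements}(1), and then exploit the fact that $Ny = Nx$ has $2$-power order in $\overline{G}$ to force $Ny_2 = N$. This will yield $Nx = Ny_1$ with $y_1$ a nontrivial real $2$-element of $G$ (nontrivial because $Nx \neq N$ implies $y_1 \notin N$).

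The final step will be a direct appeal to Lemma \ref{lem:real 2-elements}, whose hypotheses match exactly the assumptions of the present statement, concluding that $y_1$ is a central involution of $G$; hence $Nx = Ny_1$ lies in $\Center(\overline{G})$. No serious obstacle arises; the only subtle point is that Lemma \ref{lem: real elements}(6) does not automatically supply a lift of $2$-power order, and one has to verify that passing to the $2$-part of the lift is compatible both with realness and with recovering the original coset $Nx$.
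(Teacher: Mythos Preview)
Your proposal is correct and follows essentially the same approach as the paper's proof: lift via Lemma \ref{lem: real elements}(6), extract a real $2$-element in the same coset, then apply Lemma \ref{lem:real 2-elements}. The only cosmetic difference is that the paper writes out B\'ezout coefficients $1=uk+vm$ and takes $z=y^{vm}$, whereas you phrase this as passing to the $2$-part $y_1$ of $y$; these yield the same element and the arguments are interchangeable.
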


\begin{proof}
Let $N=\OO_{2'}(G)\unlhd G$ and let $Nx$ be a real element of $G/N$ of order $k:=2^c\ge 2$. Then $Nx=Ny$ for some real element $y\in G$ by Lemma \ref{lem: real elements}(6).  We see that $y^{k}\in N$ and so $(y^k)^m=1$ for some odd integer $m\ge 1$. As $(k,m)=1$, $1=uk+vm$ for some integers $u,v$. We have $Nx=Ny=Ny^{uk+vm}=(Ny^{uk})(Ny^{vm})=Ny^{vm}$ as $y^k\in N$. Clearly $z:=y^{vm}$ is a nontrivial real element of $G$ whose order divides $k=2^c$ and $Nx=Ny=Nz$.
By Lemma \ref{lem:real 2-elements}, $z$ is a central involution of $G$ and thus $Nx$ is also a central involution of $G/N.$ 
\end{proof}

In the next theorem, we show that if a finite group satisfies the hypothesis of Theorem C, then it is solvable.
\begin{thm}\label{th:equal 2-parts-solvability}
Let $G$ be a finite group. Suppose that $|x^G|_2=2^a$ for all non-central real elements $x\in G$. Assume further that $G$ has a  Sylow $2$-subgroup $S$ with $\Real(S)\subseteq \Center(S)$. Then $G$ is solvable.
\end{thm}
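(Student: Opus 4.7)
My plan is induction on $|G|$, taking $G$ to be a minimal non-solvable counterexample. First, if $a=0$ then every non-central real class of $G$ has odd size, hence every real class of $G$ has odd size, so Lemma \ref{lem:odd size} yields a normal Sylow $2$-subgroup and Feit--Thompson makes $G$ solvable, a contradiction. So $a\geq 1$. Setting $K=\OO^{2'}(G)$, the inclusion $S\leq K$ preserves the Chillag--Mann hypothesis, and Lemma \ref{lem:reduction} transfers the $2$-part condition to $K$, so minimality forces $G=\OO^{2'}(G)$.

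For the second reduction I put $N=\OO_{2'}(G)$ and $\bar G=G/N$; since $|N|$ is odd, the Sylow $2$-subgroup of $\bar G$ is isomorphic to $S$ and inherits the Chillag--Mann condition. For the $2$-part condition on $\bar G$, given a non-central real $\bar x\in\bar G$ I decompose $\bar x=\bar x_2\bar x_{2'}$; Lemma \ref{lem:central involutions} places $\bar x_2$ in $\Center(\bar G)$, so $\Centralizer_{\bar G}(\bar x)=\Centralizer_{\bar G}(\bar x_{2'})$ and necessarily $\bar x_{2'}\neq 1$. Using Lemma \ref{lem: real elements}(6) I would lift $\bar x_{2'}$ to a real $y\in G$ of odd order; then $y$ is non-central since a central real element of odd order is trivial. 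Because $|N|$ is odd, the conjugation action of $N_G(yN)=\{g\in G:y^g\in yN\}$ on the coset $yN$ has kernel $\Centralizer_G(y)$ and index dividing $|N|$, whence $|\Centralizer_{\bar G}(\bar y)|_2=|\Centralizer_G(y)|_2$ and $|\bar x^{\bar G}|_2=|y^G|_2=2^a$. Induction then makes $\bar G$ (and hence $G$, as $N$ has odd order) solvable, a contradiction, so $\OO_{2'}(G)=1$.

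With these reductions Lemma \ref{lem:real 2-elements} applies directly and every nontrivial real $2$-element of $G$ is a central involution; since every involution is trivially real, every involution of $G$ lies in $\Center(G)$, giving $\Omega_1(S)\leq\Center(G)$, and the condition $\Real(R)\subseteq\Center(R)$ descends to every subgroup $R\leq S$. The remaining task is to derive a contradiction from: $G$ non-solvable, $\OO_{2'}(G)=1$, every involution central, and $\Real(S)\subseteq\Center(S)$. I would analyze the generalized Fitting subgroup $F^{\ast}(G)=\OO_2(G)\cdot E(G)$. If $G$ has a component $Q$, then $\Center(Q)$ is a $2$-group, $Q/\Center(Q)$ is non-abelian simple of even order, and its Sylow $2$-subgroup $R$ (sitting inside some conjugate of $S$) satisfies $\Real(R)\subseteq\Center(R)$---a strong restriction ruling out $R$ being generalized quaternion and, I expect, ultimately incompatible with $Q/\Center(Q)$ being non-abelian simple via CFSG (combining Walter's classification of simple groups with abelian Sylow $2$-subgroup and the Brauer--Suzuki theorem with explicit real-class data for involutions in quasisimple groups). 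If $E(G)=1$, then $F^{\ast}(G)=\OO_2(G)=:P$, $G/P$ embeds into $\Aut(P)$, and non-solvability of $G/P$ gives a non-abelian composition factor whose involutions I would lift to non-central real $2$-elements of $G$, contradicting the central-involution property. The main obstacle is this last structural step---converting $\Real(R)\subseteq\Center(R)$ into a decisive obstruction to non-abelian simplicity of $Q/\Center(Q)$---which is likely to require a direct appeal to CFSG or a careful synthesis of the Chillag--Mann characterization of the relevant $2$-groups with real-class information in quasisimple groups of even order.
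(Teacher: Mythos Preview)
Your reductions and the identification of the central-involution property in $\bar G=G/\OO_{2'}(G)$ are on track and align with the paper's setup. The genuine gap is the final structural step, which you yourself flag as the main obstacle. The missing tool is Griess's theorem \cite{Griess}: if $\OO_{2'}(\bar G)=1$ and every involution of $\bar G$ lies in $\Center(\bar G)$, then the last term of the derived series of $\bar G$ is a direct product $L_1\times\cdots\times L_n$ with each $L_i\cong\SL_2(q)$ ($q\geq 5$ odd) or $L_i\cong 2\cdot\Alt_7$. Once this is in hand, the contradiction is immediate and is essentially the one you already spotted: each $L_i$ has generalized quaternion Sylow $2$-subgroups, hence contains a real element of $2$-power order at least $4$ (for $\SL_2(q)$, the generator $\beta$ of the cyclic index-$2$ subgroup of the Sylow satisfies $\beta^\alpha=\beta^{-1}$; for $2\cdot\Alt_7$ one reads this off the Atlas). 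Such an element is real in $\bar G$ but cannot lie in $\Center(\bar G)$ since $|\Center(L_i)|=2$, contradicting Lemma~\ref{lem:central involutions}. So rather than assembling a bespoke CFSG argument via Walter and Brauer--Suzuki, you should invoke Griess and be done.

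Two smaller remarks. First, the paper never reduces to $\OO_{2'}(G)=1$; it applies Lemma~\ref{lem:central involutions} and then Griess directly to $\bar G=G/\OO_{2'}(G)$. Your attempt to transfer the full equal-$2$-part hypothesis to $\bar G$ has a soft spot: the equality $|\bar y^{\bar G}|_2=|y^G|_2$ for the specific real lift $y$ is not fully justified---a coprime fixed-point argument gives some $y'\in yN$ centralized by a Sylow $2$-subgroup of the preimage of $\Centralizer_{\bar G}(\bar y)$, but that $y'$ need not be real, so you cannot immediately read off $|y'^G|_2=2^a$. This detour is unnecessary once you use Griess. Second, the reduction to $G=\OO^{2'}(G)$ is correct but likewise unused in the paper's argument.
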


\begin{proof}
Let $G$ be a minimal counterexample to the theorem and let $S$ be  a Sylow $2$-subgroup of $G$ with $\Real(S)\subseteq \Center(S)$.  Then $G$ is non-solvable and thus $G$ has no normal Sylow $2$-subgroup. By Lemma \ref{lem:even order real}, $G$ has a nontrivial real element $z$ of odd order. Clearly, $z$ is not central and thus $|z^G|$ is always even. Therefore, $|z^G|_2=2^a\ge 2.$

It follows from Lemma \ref{lem:central involutions} that every nontrivial real element of $G/\OO_{2'}(G)$ of $2$-power order lies in the center of $G/\OO_{2'}(G)$. In particular, all involutions of $G/\OO_{2'}(G)$ are in the center of $G/\OO_{2'}(G)$. Now we can apply results in \cite{Griess}. Since $\OO_{2'}(G/\OO_{2'}(G))=1$, by the main theorem in \cite{Griess}, the last term of the derived series of $G/\OO_{2'}(G)$, say $H/\OO_{2'}(G)$ is isomorphic to a direct product $L_1\times L_2\times\cdots\times L_n$, where each $L_i$ is isomorphic to either $\SL_2(q)$ with $q\ge 5$ odd or $2\cdot \Alt_7$, the perfect double cover of $\Alt_7$. 

For each $i$, every real element of $L_i$ is also a real element of $G/\OO_{2'}(G)$ as $L_i$ is a subgroup of $G/\OO_{2'}(G)$. Moreover, every nontrivial real element of $L_i$  of $2$-power order must lie in the center of $G/\OO_{2'}(G)$ and hence must be in $\Center(L_i).$ Thus to obtain a contradiction, we need to find a real element $x\in L_i$ of order $2^m\ge 4$. Notice that  $|\Center(L_i)|=2$ for all $i\ge 1$.

Assume first that $L_i\cong 2\cdot\Alt_7$ for some $i\ge 1.$ Using \cite{Atlas}, $L_i$ has a real element $x$ of order $4$. Assume next that $L_i\cong \SL_2(q)$ for some $q\ge 5$ odd. It is well known that the Sylow $2$-subgroup $T$ of $\SL_2(q)$ is a generalized quaternion group of oder $2^{k+1}$ for some $k\ge 2.$ (See, for example, Theorem 2.8.3 in \cite{Gorenstein}). Now $T$ is generated by two elements $\alpha$ and $\beta$ such that  $o(\beta)=2^k,o(\alpha)=4$, $\alpha^2=\beta^{2^{k-1}}$ and $\beta^\alpha=\beta^{-1}$. Thus $\beta$ is a real element of $\SL_2(q)$ of order $2^k\ge 4$. We can take $x=\beta$.
The proof is now complete.
\end{proof}

We now prove the $2$-nilpotence part of Theorem C.
\begin{thm}\label{th:equal 2-parts-2-nilpotent}
Let $G$ be a finite group. Suppose that $|x^G|_2=2^a$ for all non-central real elements $x\in G$. Assume further that $G$ has a  Sylow $2$-subgroup $S$ with $\Real(S)\subseteq \Center(S)$. Then $\OO^{2'}(G)$ is $2$-nilpotent.
\end{thm}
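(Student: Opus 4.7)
The plan is to proceed by induction on $|G|$. By Theorem~\ref{th:equal 2-parts-solvability}, $G$ is solvable. First I would dispose of the trivial case $a = 0$: here every real class size of $G$ is odd, so by Lemma~\ref{lem:odd size} $G$ has a normal Sylow $2$-subgroup $S$; then $\OO^{2'}(G) = S$ is a $2$-group, hence trivially $2$-nilpotent. When $a \geq 1$, writing $H := \OO^{2'}(G)$, the index $[G:H]$ is odd and $S \leq H$ (as $H$ contains every $2$-element of $G$), so $S$ is a Sylow $2$-subgroup of $H$ with $\Real(S) \subseteq \Center(S)$, and Lemma~\ref{lem:reduction} yields $|x^H|_2 = 2^a$ for every non-central real $x \in H$. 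Applying the inductive hypothesis to $H$ when $H \lneq G$, we reduce to the case $G = \OO^{2'}(G)$.

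Next I would reformulate the conclusion. Under $G = \OO^{2'}(G)$, ``$\OO^{2'}(G)$ is $2$-nilpotent'' is equivalent to ``$\bar G := G/\OO_{2'}(G)$ has a normal Sylow $2$-subgroup'', since the normal $2$-complement of $\OO^{2'}(G)$ must equal $\OO^{2'}(G) \cap \OO_{2'}(G)$ and then $\overline{\OO^{2'}(G)}$ becomes the Sylow $2$-subgroup of $\bar G$. By Lemma~\ref{lem:even order real}, this is in turn equivalent to $\bar G$ having no real element of odd order greater than $1$. Moreover, by Lemma~\ref{lem:central involutions} (applicable since $a \geq 1$), every nontrivial real element of $\bar G$ of $2$-power order is already central in $\bar G$, so the only obstruction to the desired conclusion is the possible existence of real elements of odd prime order in $\bar G$.

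Suppose for contradiction that $\bar G$ has a real element of odd prime order $q$. Since $G = \OO^{2'}(G)$ gives $\bar G = \OO^{2'}(\bar G)$, Lemma~\ref{lem:real element of order p} guarantees such an $\bar x$ exists for any odd $q$ dividing $|\bar G|$. Using Lemma~\ref{lem: real elements}(6) and (1) I would lift $\bar x$ to a real $y \in G$ of order exactly $q$; then $y \notin \Center(G)$ and $|y^G|_2 = 2^a \geq 2$. By Lemma~\ref{lem: real elements}(2), pick a $2$-element $t \in G$ with $y^t = y^{-1}$. Mimicking the proof of Lemma~\ref{lem:real 2-elements}, $t$ cannot be an involution: otherwise $t \in \Real(S') \subseteq \Center(S')$ for some Sylow $2$-subgroup $S' \ni t$, forcing $|t^G|$ odd, which (with $a \geq 1$) would place $t$ in $\Center(G)$ and contradict $y^t = y^{-1} \neq y$. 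So $o(t) = 2^k$ with $k \geq 2$, and the same argument applied to the real involution $z := t^{2^{k-1}}$ shows $z \in \Center(G)$.

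The final and hardest step is to convert this configuration into a contradiction. The key qualitative observation is that $yt$ satisfies $(yt)^2 = t^2$, hence $o(yt) = 2^k \geq 4$; if $yt$ were real in $G$, Lemma~\ref{lem:real 2-elements} would force it to be a central involution, contradicting its order. So $yt$, and by the same reasoning every element $y^i t^j$ with $j$ odd, is a non-real $2$-element of $G$. The main obstacle will be to convert this qualitative structural fact into a numerical contradiction with $|y^G|_2 = 2^a$: the idea is to analyse the $G$-conjugacy classes of these non-real $2$-elements inside $\langle y, t\rangle \cong C_q \rtimes C_{2^k}$ together with the severe constraint $\Real(S) \subseteq \Center(S)$ on the Sylow $2$-subgroup (which, for instance, precludes any $Q_8$ subgroup of $S$, since an element of order $4$ in $Q_8 \leq S$ would be real in $S$ but not in $\Center(S)$). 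Combining this rigidity with the uniform $2$-part hypothesis should yield the desired contradiction.
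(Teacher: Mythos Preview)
Your reduction to $G=\OO^{2'}(G)$ and the case $a=0$ are fine, and you correctly isolate the heart of the matter: showing that $\bar G=G/\OO_{2'}(G)$ is a $2$-group. The problem is the final step, which you yourself flag as ``the main obstacle'' and leave as a plan rather than an argument. In fact the plan cannot work as stated, because the configuration you set up---a real $y$ of odd prime order, a $2$-element $t$ of order $\ge 4$ with $y^t=y^{-1}$, and $yt$ a non-real $2$-element---already occurs in groups that \emph{do} satisfy the conclusion. Take $G=C_q\rtimes C_4$ with $q$ an odd prime and the generator $t$ of $C_4$ inverting $C_q$ (so $t^2$ acts trivially). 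Then $G=\OO^{2'}(G)$, $S=\langle t\rangle\cong C_4$ has $\Real(S)=\{1,t^2\}\subseteq\Center(S)$, every non-central real class has size $2$ (so $a=1$), and $G$ is $2$-nilpotent with $\OO_{2'}(G)=C_q$. Yet exactly your configuration is present. The point is that your analysis of $\langle y,t\rangle$ never uses the one piece of information that distinguishes the bad case, namely that $\bar y\ne 1$, i.e.\ $y\notin\OO_{2'}(G)$. Without feeding that back in, no contradiction with the uniform $2$-part hypothesis can emerge.

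The paper takes a completely different (and short) route that sidesteps this difficulty. After reducing to $G=\OO^{2'}(G)$ and passing to $\bar G=G/\OO_{2'}(G)$, one uses Lemma~\ref{lem:central involutions} to see that every real element of $\bar G$ of $2$-power order lies in $\Center(\bar G)$; in particular a Hall $2'$-subgroup $\bar H$ of the solvable group $\bar G$ centralises every real element of order at most $4$ in $\bar P:=\OO_2(\bar G)$. Now the key external input is Isaacs--Navarro \cite[Theorem~B]{IN2}, which says precisely that a $2'$-group centralising all real elements of order $\le 4$ in a $2$-group must centralise the whole $2$-group. Hence $\bar H\le\Centralizer_{\bar G}(\bar P)\le\bar P$ by Hall--Higman~1.2.3 (using $\OO_{2'}(\bar G)=1$), forcing $\bar H=1$ and $\bar G=\bar P$. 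This is where the global information about $\bar G$ enters, something your local analysis of $\langle y,t\rangle$ cannot capture.
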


\begin{proof}
By Lemma \ref{lem:reduction}, we can assume that $G=\OO^{2'}(G)$. Let $\overline{G}=G/\OO_{2'}(G)$ and use the `bar' notation. Now $G$ is solvable by Theorem \ref{th:equal 2-parts-solvability}. Let $\overline{P}=\OO_{2}(\overline{G})$. 
Since $\overline{G}$ is solvable, it possesses a Hall $2'$-subgroup, say $\overline{H}$. It follows from Lemma \ref{lem:central involutions} that every real element of $\overline{G}$ whose order is a power of $2$ lies in the center of $\overline{G}$. This implies that $\overline{H}$ centralizes all real elements of order at most 4 of $\overline{P}$ and thus by \cite[Theorem B]{IN2}, $\overline{H}$ centralizes $\overline{P}$. By Hall-Higman 1.2.3, $\overline{H}\leq \Centralizer_{\overline{G}}(\overline{P})\leq \overline{P}$ which forces $\overline{H}=1$. This means that $\overline{G}=\overline{P}$ is a $2$-group and so $G$ is $2$-nilpotent as required.
\end{proof}

Finally, Theorem C follows by combining Theorems \ref{th:equal 2-parts-solvability} and \ref{th:equal 2-parts-2-nilpotent}.


\end{document}